\long\def\symbolfootnote[#1]#2{\begingroup\def\thefootnote{\fnsymbol{footnote}}\footnote[#1]{#2}\endgroup}

\documentclass{amsart}
\usepackage{amssymb}
\usepackage{amsmath}
\usepackage{amsfonts}

\setcounter{MaxMatrixCols}{10}

\newtheorem{theorem}{Theorem}[section]
\newtheorem{corollary}[theorem]{Corollary}
\newtheorem{question}[theorem]{Question}
\newtheorem{lemma}[theorem]{Lemma}
\theoremstyle{remark}

\theoremstyle{definition}

\theoremstyle{proposition}
\newtheorem{proposition}[theorem]{Proposition}
\numberwithin{equation}{section}

\begin{document}
\title{Geometry of manifolds with densities}
\author{Ovidiu Munteanu and Jiaping Wang}

\begin{abstract}
We study geometry of complete Riemannian manifolds endowed with a weighted measure, where the weight
function is of quadratic growth. Assuming the associated Bakry-\'{E}mery curvature is bounded from below,
we derive a new Laplacian comparison theorem and establish various sharp volume upper and lower bounds.
We also obtain some splitting type results by analyzing the Busemann functions. In particular,
we show that a complete manifold with nonnegative Bakry-\'{E}mery curvature must split off a line if
it is not connected at infinity and its weighted volume entropy is of maximal value among linear growth
weight functions.

While some of our results are even new for the gradient Ricci solitons, the novelty here is that only
a lower bound of the Bakry-\'{E}mery curvature is involved in our analysis.
\end{abstract}

\maketitle
\section{Introduction}

\symbolfootnote[0]{Research of the first author supported in part by NSF grant DMS-1005484;
the second author by NSF grant DMS-1105799}

On a smooth Riemannian manifold $\left( M,g\right),$ the well-known
Bochner formula takes the form

\begin{equation*}
\frac{1}{2}\,\Delta |\nabla u|^2=|\mathrm{Hess}(u)|^2+\langle \nabla u, \nabla \Delta u\rangle+
\mathrm{Ric}(\nabla u, \nabla u)
\end{equation*}%
for any smooth function $u$ on $M,$ where $\mathrm{Hess} (u)$ denotes the hessian of $u$ and $\mathrm{Ric}$
the Ricci curvature of $M.$ This formula may be viewed as the definition of the Ricci curvature. Indeed,
it serves as the starting point of much of the analysis involving the Ricci curvature.

There are various occasions that consideration of weighted measure of the form $e^{-f}dv$ on $M$ is natural or warranted for a smooth function $f$ on $M.$
Such $f$ is called the weight function throughout this paper.
With respect to the weighted measure, one has the corresponding weighted Dirichlet energy functional $E_f(u)=\int_M |\nabla u|^2\,e^{-f}\,dv.$
Just as that the Laplacian $\Delta$ is associated with the Dirichlet energy,
the Euler-Lagrange operator of $E_f(u)$ is the so-called weighted Laplacian
$\Delta_f$ given by
\begin{equation*}
\Delta_f u=\Delta u-\langle \nabla f, \nabla u\rangle.
\end{equation*}%
In terms of $\Delta_f,$ the Bochner formula can be rewritten into

\begin{equation*}
\frac{1}{2}\,\Delta_f |\nabla u|^2=|\mathrm{Hess}(u)|^2+\langle \nabla u, \nabla \Delta_f u\rangle+
\mathrm{Ric}_f\,(\nabla u, \nabla u),
\end{equation*}%
where
\begin{equation*}
\mathrm{Ric}_{f}:=\mathrm{Ric}+\mathrm{Hess}\left( f\right).
\end{equation*}%
The curvature quantity $\mathrm{Ric}_f$ is called the Bakry-\'{E}mery curvature \cite{BE} of the smooth metric measure space $(M,g,e^{-f}dv).$

In \cite{BE}, Bakry-\'{E}mery showed that if $\mathrm{Ric}_f\geq \frac{1}{2},$
then the following logarithmic Sobolev inequality holds.

\begin{equation*}
\int_M u^2\,\ln u^2\, e^{-f}\,dv\leq 4 \int_M |\nabla u|^2 \,e^{-f}\,dv
\end{equation*}%
for compactly supported smooth function $u$ on $M$ satisfying

\begin{equation*}
\int_M u^2\, e^{-f}\,dv=\int_M e^{-f}\,dv.
\end{equation*}%
Specializing to the Euclidean space $\mathbb{R}^{n}$ with $f(x)=\frac{1}{4}\left\vert x\right\vert
^{2},$ one then recovers the classical logarithmic Sobolev inequality due to L. Gross \cite{G}.

More generally, in recent work of Lott and Villani \cite{LV}, and Sturm
\cite{S1, S2},
a lower bound for the Bakry-\'{E}mery curvature is interpreted as a convexity measurement for certain entropy functional on the space of probability measures over $M$ with respect to the Wasserstein distance. Indeed, this point of view has enabled them to define ``Ricci curvatures" for more general metric measure spaces.

Our interest in the Bakry-\'{E}mery curvature is largely due to gradient Ricci solitons. A Riemannian manifold $(M,g)$ is called a gradient Ricci soliton if
$\mathrm {Ric}_f=\lambda\, g$ for some function $f$ and constant $\lambda.$
Obviously, this is a generalization of the concept of Einstein manifolds.
A Ricci soliton is called shrinking, steady, or expanding, according to whether
the constant $\lambda$ is positive, zero, or negative.
Customarily, one assumes $\lambda=\frac{1}{2}, 0,$ or $-\frac{1}{2}$ by scaling
the metric. An easy example of nontrivial gradient Ricci solitons is the Euclidean space $\mathbb{R}^{n}$ with
$f(x)=\frac{\lambda}{2}\left\vert x\right\vert ^{2}+\langle a, x\rangle+b.$
Gradient Ricci solitons are naturally associated with the Ricci flows, as suggested by the name. Indeed, they are simply the self-similar solutions of the Ricci flows up to a suitable scaling. As well-known, knowledge of gradient Ricci solitons is crucial in the understanding of singularity formations of the Ricci flows \cite{H, CLN}. We shall refer readers to \cite{Cao} for a recent survey on Ricci solitons.

The study of smooth manifolds with the Bakry-\'{E}mery curvature bounded below
has been very active in recent years. Much effort has been devoted to establishing
results parallel to the case the Ricci curvature is bounded below. This
has been largely successful when the weight function $f$ is assumed to be bounded. We refer to the work of Lott \cite{L}, Wei and Wylie \cite{WW}, and ourselves
\cite{MW, MW1} for further details. In fact, Lichenorewicz \cite{Lich} has already observed that the classical Cheeger-Gromoll splitting theorem continues to hold for manifolds with $\mathrm {Ric}_f\ge 0$ if the weight function $f$ is bounded. Various results concerning volume comparison and analysis of the Laplacian have also been established.

When the weight function $f$ is of linear growth, by working with the weighted Laplacian, we have obtained in \cite{MW, MW1}, among other things, some results concerning the topology at infinity of manifolds with a lower bound on the
Bakry-\'{E}mery curvature. As an application, we concluded that a nontrivial steady gradient Ricci soliton must be connected at infinity.

It is useful to regard Ricci solitons as canonical examples of smooth metric measure spaces with the Bakry-\'{E}mery curvature bounded below. However, as indicated
by the results of Cao and Zhou \cite{CZ} and ourselves \cite{MW1}, the weight function $f$ is necessarily of quadratic growth for both shrinking and expanding solitons. Therefore, it requires one to study the Bakry-\'{E}mery curvature with $f$ being quadratic in order for the results to be relevant to gradient Ricci solitons. This is what we attempt to do here.

Our first result concerns the volume growth of $\left(
M,g\right) .$ The important point here is that we are able to  estimate the growth of the Riemannian volume, not the weighted volume. In the following, we denote by $r\left( x\right) :=d\left( p,x\right) $ the distance from $x$ to a fixed point $p\in M.$

\begin{theorem}
\label{Vol1}Let $\left( M,g,e^{-f}dv\right) $ be a complete smooth metric
measure space of dimension $n$ such that $\mathrm{Ric}_{f}\geq \lambda.$
Assume that there exist $\alpha ,\beta \ge 0$ so that
\begin{equation*}
\ \left\vert \nabla f\right\vert \left( x\right) \leq \alpha r\left(
x\right) +\beta \text{ on }M.
\end{equation*}%

\begin{enumerate}
\item If $\alpha =\lambda,$ then there exist positive constants $C_1, C_2>0$
so that
\begin{equation*}
\mathrm{Vol}\left( B_{p}\left( R\right) \right) \leq C_1\,e^{C_2\,\sqrt{R}}
\text{ \
\ for all }R>0.
\end{equation*}

\item If $\alpha >\lambda,$ then there exists a positive constant $C>0$
so that
\begin{equation*}
\mathrm{Vol}\left( B_{p}\left( R\right) \right) \leq C\,e^{\sqrt{\left(
n-1\right) \left( \alpha -\lambda\right) }\,R}
\text{ \ \ for all }R>0.
\end{equation*}
\end{enumerate}
\end{theorem}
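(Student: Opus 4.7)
The plan is to derive the volume bounds from a Laplacian estimate for the distance function $r(x) := d(p,x)$, obtained by integrating the Bochner formula along a geodesic and then combining with Cauchy--Schwarz and ODE comparison.

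First, I would apply the Bochner identity to $r$ along a unit-speed minimizing geodesic $\gamma$ from $p$. Outside the cut locus, $|\nabla r|^2 \equiv 1$ and $\mathrm{Hess}(r)(\nabla r, \cdot) = 0$, so $|\mathrm{Hess}(r)|^2 \geq (\Delta r)^2/(n-1)$ by Cauchy--Schwarz. Combining Bochner with $\mathrm{Ric} = \mathrm{Ric}_{f} - \mathrm{Hess}(f) \geq \lambda - \mathrm{Hess}(f)$ yields the Riccati-type inequality
\begin{equation*}
(\Delta r)'(t) + \frac{(\Delta r(t))^2}{n-1} + \lambda \leq (f\circ \gamma)''(t).
\end{equation*}
Integrating from a small fixed $r_0 > 0$ to $R$ and using $|(f\circ \gamma)'(t)| \leq \alpha t + \beta$ to bound $(f\circ\gamma)'(R) - (f\circ \gamma)'(r_0) \leq \alpha R + O(1)$, this gives
\begin{equation*}
\Delta r(R) + \frac{1}{n-1}\int_{r_0}^R (\Delta r(s))^2\, ds \leq C_0 + (\alpha - \lambda) R,
\end{equation*}
with $C_0$ depending only on $r_0, n, \alpha, \beta, \lambda$, and $\Delta r(r_0)$.

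Setting $A(R) := \int_{r_0}^R \Delta r(s)\, ds$ so that $A'(R) = \Delta r(R)$, and applying Cauchy--Schwarz in the form $A(R)^2 \leq (R-r_0)\int_{r_0}^R (\Delta r)^2\, ds$, the above converts into the differential inequality
\begin{equation*}
A'(R) + \frac{A(R)^2}{(n-1)(R-r_0)} \leq C_0 + (\alpha - \lambda) R.
\end{equation*}
I would then compare $A$ with an explicit upper barrier $M$. In case~(2), the ansatz $M(R) = \sqrt{(n-1)(\alpha-\lambda)}\,(R-r_0) + C_1$ satisfies the reverse inequality for $C_1$ sufficiently large, giving $A(R) \leq \sqrt{(n-1)(\alpha-\lambda)}\, R + O(1)$. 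In case~(1), where $\alpha = \lambda$, the analogous barrier $M(R) = \sqrt{(n-1) C_0 (R-r_0)} + C_1$ yields $A(R) \leq C\sqrt{R}$. Since the volume density $J(r, \theta)$ in geodesic polar coordinates satisfies $(\log J)'(r, \theta) = \Delta r(r, \theta)$ outside the cut locus, one has $J(R, \theta) \leq J(r_0, \theta)\, e^{A(R, \theta)}$, and integrating over $\theta \in S^{n-1}$ and over $r \in [r_0, R]$ produces the stated Riemannian volume upper bounds.

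The main obstacle I expect is the ODE comparison in the last step: one must verify the reverse differential inequality $M' + M^2/((n-1)(R-r_0)) \geq C_0 + (\alpha-\lambda)R$ together with the initial condition $A(r_0) = 0 \leq M(r_0)$, and then invoke standard ODE comparison (the relevant monotonicity of $A \mapsto A^2$ holds for $A \geq 0$, and if $A < 0$ the volume bound is trivial). A secondary technical point is that the pointwise Riccati inequality holds only outside the cut locus, which is handled either by Calabi's barrier trick near cut points, or equivalently by observing that $J(r, \theta)$ vanishes beyond the cut locus distance so the exponential bound extends trivially.
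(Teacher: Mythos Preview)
Your proposal is correct and follows essentially the same approach as the paper: integrate the Riccati inequality along geodesics, apply Cauchy--Schwarz to pass from $\int m^2$ to $(\int m)^2/r$, and then run a barrier/first-crossing comparison for the resulting first-order ODI in $A(R)=\int \Delta r$. The only cosmetic difference is that for the borderline case $\alpha=\lambda$ the paper derives the $e^{C\sqrt{R}}$ bound by applying the $\alpha>\lambda$ estimate with $\alpha'=\lambda+1/r$ and letting the constants absorb the $r$-dependence, whereas you use a direct $\sqrt{R}$-type barrier; both routes are valid.
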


Some remarks are in order. First, we point out that $\alpha \geq \lambda$
always holds true. This is obvious if $\lambda\leq 0.$ If $\lambda>0,$
then an argument using the second variation formula of the arc-length
(see \cite{CZ}) quickly implies the weight function $f$ is of at
least quadratic growth with $\alpha\geq \lambda.$ Secondly, when
$\alpha=\lambda=0,$ the result is essentially due to Sesum and the first author
\cite{MS}. Thirdly, if we consider $M=\mathbb{H}^{n},$ the hyperbolic space
of constant sectional curvature $-1,$ and let $f\left( x\right) :=\frac{n-1+\lambda
}{2}r^2 \left( x\right),$  then $f_{ij}\geq n-1+\lambda $ and $\mathrm{Ric}+\mathrm{Hess}\left( f\right) \geq \lambda$ on $M.$ Since
$\alpha=n-1+\lambda$ in this case, our estimate implies
the volume growth is%
\begin{equation*}
\mathrm{Vol}\left( B_{p}\left( R\right) \right) \leq C\,e^{(n-1)\,R},
\end{equation*}
which is obviously sharp.

If we denote the bottom spectrum of the Laplacian $\Delta$ of manifold $M$
by $\lambda_1(M),$ then according to \cite{LW1}, the volume of $M$ satisfies
\begin{equation*}
\mathrm{Vol}\left( B_{p}\left( R\right) \right) \geq C\,e^{2\,\sqrt {\lambda_1(M)} \,R} \text{ \ \ for all }R>1.
\end{equation*}
Combining these two estimates, we have the following corollary.

\begin{corollary}
Let $\left( M,g,e^{-f}dv\right) $ be a complete smooth metric
measure space of dimension $n$ with $\mathrm{Ric}_{f}\geq \lambda.$
Assume that there exist $\alpha ,\beta \ge 0$ so that
\begin{equation*}
\ \left\vert \nabla f\right\vert \left( x\right) \leq \alpha r\left(
x\right) +\beta \text{ on }M.
\end{equation*}%
Then the bottom spectrum $\lambda_1(M)$ satisfies
\begin{equation*}
\lambda_1(M)\le \frac{(n-1)(\alpha-\lambda)}{4}.
\end{equation*}%
\end{corollary}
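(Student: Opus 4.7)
The proof is essentially a one-line combination of Theorem \ref{Vol1} with the Li--Wang lower bound cited just above the corollary, so the plan is almost entirely bookkeeping.

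First, I would reduce to the case $\alpha > \lambda$. The remark preceding the corollary notes that $\alpha \ge \lambda$ always. If $\alpha = \lambda$, then Theorem \ref{Vol1}(1) gives
\begin{equation*}
\mathrm{Vol}(B_p(R)) \le C_1\,e^{C_2\sqrt{R}},
\end{equation*}
which is sub-exponential. Comparing with the Li--Wang lower bound $\mathrm{Vol}(B_p(R)) \ge C\,e^{2\sqrt{\lambda_1(M)}\,R}$ and letting $R\to\infty$ forces $\lambda_1(M)=0$, matching the claimed bound $\tfrac{(n-1)(\alpha-\lambda)}{4}=0$.

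In the remaining case $\alpha > \lambda$, I would simply sandwich the volume:
\begin{equation*}
C\,e^{2\sqrt{\lambda_1(M)}\,R} \;\le\; \mathrm{Vol}(B_p(R)) \;\le\; C'\,e^{\sqrt{(n-1)(\alpha-\lambda)}\,R}
\qquad\text{for all } R>1,
\end{equation*}
using Li--Wang on the left and Theorem \ref{Vol1}(2) on the right. Taking logarithms, dividing by $R$, and letting $R\to\infty$ yields
\begin{equation*}
2\sqrt{\lambda_1(M)} \;\le\; \sqrt{(n-1)(\alpha-\lambda)},
\end{equation*}
and squaring gives the asserted inequality.

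There is no real obstacle here, since both building blocks are available. The only minor point worth flagging in the write-up is that the Li--Wang estimate is used in its general form (no hypothesis on $f$ or $\mathrm{Ric}_f$ beyond completeness of $(M,g)$), so the weight enters only through the upper bound supplied by Theorem \ref{Vol1}. Consequently the corollary follows with no additional calculation.
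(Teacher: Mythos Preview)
Your proposal is correct and follows precisely the approach indicated in the paper: the authors state the Li--Wang volume lower bound $\mathrm{Vol}(B_p(R))\ge C\,e^{2\sqrt{\lambda_1(M)}\,R}$ and then write ``Combining these two estimates, we have the following corollary,'' which is exactly the sandwich argument you carry out. Your explicit treatment of the borderline case $\alpha=\lambda$ (sub-exponential upper bound forcing $\lambda_1(M)=0$) is a detail the paper leaves implicit but is handled correctly.
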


Again, one sees this upper bound is sharp and achieved as seen from the
previous example. In view of the work of \cite{LW1} and \cite{LW2}, where
they have studied the equality case of Cheng's estimate \cite{C}
$\lambda_1(M)\le \frac{(n-1)^2}{4}$ on an $n$-dimensional manifold $M$
with $\mathrm {Ric}\ge -(n-1),$ one wonders if a parallel result exists.

\begin{question}
Let $\left( M,g,e^{-f}dv\right) $ be a complete smooth metric
measure space of dimension $n\ge 3$ with $\mathrm{Ric}_{f}\geq \lambda.$
Assume that there exist $\alpha ,\beta \ge 0$ so that
\begin{equation*}
\ \left\vert \nabla f\right\vert \left( x\right) \leq \alpha r\left(
x\right) +\beta \text{ on }M
\end{equation*}%
and the bottom spectrum $\lambda_1(M)$ satisfies
\begin{equation*}
\lambda_1(M)= \frac{(n-1)(\alpha-\lambda)}{4}.
\end{equation*}%
Is it true that $M$ must be connected at infinity or topologically a cylinder?
\end{question}

 The following result shows that the upper bound of volume growth can be improved to polynomial under a slightly stronger condition on the weight function $f.$
The result is obviously sharp.

\begin{theorem}
\label{Vol}Let $\left( M,g,e^{-f}dv\right) $ be a complete smooth metric
measure space of dimension $n$. Assume that
\begin{equation}
\mathrm{Ric}_{f}\geq \frac{1}{2}\text{ \ and \ }\left\vert \nabla
f\right\vert ^{2}\leq f.  \label{C}
\end{equation}%
Then

\begin{enumerate}

\item
  $\left( M,g\right) $ has Euclidean volume growth, i.e.,
\begin{equation*}
\mathrm{Vol}\left( B_{p}\left( R\right) \right) \leq c\left( n\right)
e^{f\left( p\right) }R^{n}
\end{equation*}%
for $R>0$ and $p\in M$. The constant $c\left( n\right) $ depends only on
$n.$

\item
$\left( M,g\right) $ has at least linear volume growth, i.e., for any $%
p\in M$ there exists a constant $c_{0}>0$ so that
\begin{equation*}
\mathrm{Vol}\left( B_{p}\left( R\right) \right) \geq c_{0}\,R,
\end{equation*}%
for $R>1$. The constant $c_{0}$ depends on $n,f\left( p\right) $ and
$\mu :=\int_{M}e^{-f}dv<\infty .$
\end{enumerate}
\end{theorem}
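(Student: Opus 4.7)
For part (1), I would follow the approach of Cao--Zhou for gradient shrinking Ricci solitons, where the Euclidean volume bound was established using the soliton trace identity $\Delta f=n/2-S$ together with $S\ge 0$. The first observation is that $|\nabla f|^2\le f$ forces $f\ge 0$ throughout $M$ and yields $|\nabla\sqrt{f}|\le 1/2$, whence $\sqrt{f(x)}\le\sqrt{f(p)}+r(x)/2$ by integrating along a minimizing geodesic from $p$. Consequently, $B_p(R)\subset D(t_R)$ where $D(t):=\{f<t\}$ and $t_R:=(\sqrt{f(p)}+R/2)^2$, so it suffices to show $V(t):=\mathrm{Vol}(D(t))\le c(n)\,e^{f(p)}\,t^{n/2}$. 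The divergence theorem on $D(t)$, with $\nabla f$ the outward co-normal, gives $\int_{D(t)}\Delta f\,dv=\int_{\partial D(t)}|\nabla f|\,d\sigma\le\sqrt{t}\,A(t)$, and the co-area inequality $V'(t)\ge A(t)/\sqrt{t}$ (Cauchy--Schwarz) converts this into a Gronwall-type ODE for $V(t)$ that integrates to the claimed bound, provided one has a suitable pointwise upper bound on $\Delta f$.

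The technical heart of part (1) is thus the estimate $\Delta f\le n/2+o(1)$. On a shrinker this follows immediately from $\Delta f=n/2-S$ and Chen's lower bound $S\ge 0$; here it must be extracted from $\mathrm{Ric}_f\ge 1/2$ and $|\nabla f|^2\le f$ alone, via the weighted Bochner formula applied to $f$. The Bochner inequality
\[
\tfrac12\,\Delta_f|\nabla f|^2\ \ge\ |\mathrm{Hess}(f)|^2+\langle\nabla f,\nabla\Delta_f f\rangle+\tfrac12|\nabla f|^2,
\]
combined with the sign constraint $u:=f-|\nabla f|^2\ge 0$, should yield the needed bound through a maximum-principle argument on $u$ (or, since $M$ may be noncompact, an Omori--Yau version thereof, justified by $\mu<\infty$). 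I expect this step to be the principal obstacle; once it is secured, the sublevel-set argument outlined above is routine.

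For part (2), noncompactness of $M$ (otherwise the claim fails for $R$ large) furnishes a unit-speed minimizing ray $\gamma$ from $p$, along which $f(\gamma(t))\le(\sqrt{f(p)}+t/2)^2$ gives a quantitative lower bound on $e^{-f(\gamma(t))}$. Using the $f$-Jacobi equation along $\gamma$ together with $\mathrm{Ric}_f\ge 1/2$ and the linear control on $|\nabla f|$, one estimates the Jacobian of the exponential map to produce a Riemannian-volume lower bound on a thin tube $T_\delta:=\{x:d(x,\gamma)<\delta\}$ that is linear in its length. The finiteness of $\mu$ enters as a normalization converting the weighted control into an unweighted one, ultimately yielding $\mathrm{Vol}(T_\delta\cap B_p(R))\ge c_0 R$ with $c_0=c_0(n,f(p),\mu)$. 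The delicate point is that $|\nabla f|$ grows linearly along $\gamma$, so one must choose $\delta$ to shrink slowly enough that the tube retains linear volume yet remains amenable to weighted Bishop--Gromov-type comparison.
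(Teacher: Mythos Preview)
Your proposal diverges substantially from the paper's argument in both parts, and in each case the route you sketch hits a real obstruction that the paper avoids by a different mechanism.

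\textbf{Part (1).} The Cao--Zhou sublevel-set scheme hinges on an upper bound for $\Delta f$ (or at least for $\int_{D(t)}\Delta f$). On a shrinker this is free: tracing $\mathrm{Ric}+\mathrm{Hess}(f)=\tfrac12 g$ gives $\Delta f=\tfrac{n}{2}-S\le \tfrac{n}{2}$. Here you only have the inequality $\mathrm{Ric}_f\ge\tfrac12$, whose trace yields $\Delta f\ge \tfrac{n}{2}-S$, the wrong direction. The Bochner/Omori--Yau argument you outline for $u=f-|\nabla f|^2$ does not obviously produce a pointwise (or even integrated) upper bound on $\Delta f$; you would need control on $|\mathrm{Hess}(f)|$ or on $\langle\nabla f,\nabla\Delta_f f\rangle$ that the hypotheses do not supply. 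This is a genuine gap, not a technicality.

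The paper bypasses $\Delta f$ entirely. It derives a self-improving Laplacian comparison for the distance function: starting from the Riccati inequality and integrating twice with a further integration by parts, one obtains
\[
m(r)\le \frac{n-1}{r}-\frac{1}{4}r+f'(r)-\frac{1}{r}\ln\frac{J(r)}{r^{n-1}}-\frac{1}{r}\bigl(f(r)-f(0)\bigr),
\]
where $m=(\ln J)'$. The hypothesis $|\nabla f|^2\le f$ gives $f'(r)\le \tfrac14 r+\tfrac{1}{r}f(r)$ via Young, which exactly cancels the $-\tfrac14 r$ and the $-\tfrac{1}{r}f(r)$ terms. Setting $H=\ln(J/r^{n-1})$, the inequality collapses to $(rH)'\le f(p)$, hence $J(R)\le e^{f(p)}R^{n-1}$. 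No information on $\Delta f$ is needed.

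\textbf{Part (2).} The tube-around-a-ray idea is the sort of comparison-geometry argument the paper explicitly flags as insufficient here: since $|\nabla f|$ grows linearly along the ray, the effective Ricci lower bound $\mathrm{Ric}\ge \tfrac12-\mathrm{Hess}(f)$ degenerates, and weighted Bishop--Gromov on a tube of slowly shrinking radius does not obviously survive this. Your sketch does not indicate how to close the estimate, and I do not see a way.

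The paper instead uses the Bakry--\'Emery logarithmic Sobolev inequality (valid because $\mathrm{Ric}_f\ge\tfrac12$). Plugging in a tent function $u_t$ supported on $B_p(t+1)\setminus B_p(t-1)$ and setting $y(t)=\int u_t^2$, the log-Sobolev inequality, combined with the Laplacian comparison above and a relative area comparison, yields a differential inequality of the form
\[
t\,y'(t)-y(t)\ln y(t)\le C\,y(t-2).
\]
A separate lemma (proved by the same comparison with weight $r^k$, $k\sim\sqrt{T/\ln T}$) shows unit balls satisfy $\mathrm{Vol}(B_x(1))\ge \exp(-C_0\sqrt{R\ln R})\,\mathrm{Vol}(B_p(1))$, which is sub-exponential. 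If $y(t)$ ever dipped below a fixed $\delta$, the differential inequality would force exponential decay $y(t)\le\sqrt{\delta}e^{-\varepsilon t}$, contradicting the sub-exponential lower bound. Hence $y(t)\ge\delta$ for all $t$, which gives the uniform area lower bound $\mathrm{Area}(\partial B_p(t))\ge c_0$. The finiteness of $\mu$ enters only through the constant in the log-Sobolev inequality.
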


Recall that $\left( M,g,f\right) $ is a gradient shrinking Ricci soliton if $\mathrm{Ric}_{f}=\frac{1}{2}.$
Any gradient shrinking soliton verifies the assumptions in Theorem \ref{Vol}.
Indeed, by Hamilton's identity \cite{H} we know that $\left\vert \nabla
f\right\vert ^{2}+S=f,$ where $S$ is the scalar curvature of $M.$
Together with a result of Chen \cite{Ch}, see also \cite{Cao}, that $S\geq 0$, this implies
that the condition (\ref{C}) is satisfied by any shrinking Ricci solitons.
Hence, Theorem \ref{Vol} recovers the upper bound in \cite{CZ} and the lower
bound in \cite{MW1} about the volume growth of gradient shrinking solitons.

Instead of relying on the soliton equations, to prove Theorem \ref{Vol}, we develop some new Laplacian comparison type results for the distance functions on $M.$ It turns out these results can also be applied to obtain some splitting theorems,
which consists of the second part of our paper. Our first result in this direction
concerns manifolds with non-negative Bakry-\'{E}mery curvature. As mentioned earlier, the first splitting theorem for such manifolds goes back to
Lichnerowicz \cite{Lich}, where $f$ is assumed to be bounded on $M.$  More recently,
in \cite{FLZ}, this assumption was relaxed to $f$ being bounded above. These are
natural generalizations of the celebrated Cheeger-Gromoll theorem \cite{CG}
for manifolds with non-negative Ricci curvature. In the following Theorem \ref%
{Ent}, we deal with the case that $f$ is not necessarily bounded.

\begin{theorem}
\label{Ent}Let $\left( M,g,e^{-f}dv\right) $ be a smooth metric measure
space of dimension $n$ with $\mathrm{Ric}_{f}\geq 0.$ Assume that $f$ has linear
growth rate $a>0$ and the weighted volume entropy of $\left(
M,g,e^{-f}dv\right) $ satisfies $h_{f}\left( M\right) =a.$ Then either $M$ is connected at infinity or $M$ splits as a direct product $M=\mathbb{R}\times N,$
where $N$ is compact.
\end{theorem}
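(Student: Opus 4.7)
The plan is to adapt the Cheeger--Gromoll splitting argument to the weighted Laplacian $\Delta_f$. Assume for contradiction that $M$ has at least two ends. A standard compactness argument applied to minimizing segments whose endpoints escape to distinct ends produces a geodesic line $\gamma:\mathbb{R}\to M$. Associated to $\gamma$ are the two Busemann functions $b_\pm(x)=\lim_{t\to\infty}(t-d(x,\gamma(\pm t)))$; these are $1$-Lipschitz, satisfy $b_+(\gamma(s))=s$ and $b_-(\gamma(s))=-s$, and by the triangle inequality $b_++b_-\le 0$ on $M$ with equality on $\gamma$.

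The crux of the argument is to establish $\Delta_f b_\pm\ge 0$ in the barrier sense. For bounded $f$ this is classical (going back to Lichnerowicz), while for linearly growing $f$ the naive weighted Laplacian comparison produces a correction term involving $\partial_r f$ that does not vanish. Here we invoke the new weighted Laplacian comparison developed in the earlier sections of this paper: under $\mathrm{Ric}_f\ge 0$ and linear growth of $f$ with rate $a$, it yields an upper bound of the form $\Delta_f d(\cdot,\gamma(\pm t))(x)\le (n-1)/d + \partial_r f + o(1)$. The hypothesis $h_f(M)=a$ is the \emph{sharp} (maximal) weighted entropy compatible with linear growth rate $a$; tracking the equality case in the resulting weighted volume comparison forces $\partial_t f(\gamma(t))\to -a$ (with the appropriate sign) along the line $\gamma$. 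In the limit $t\to\infty$ the $\partial_r f$ contribution in the comparison cancels the linear $a$-term, and one obtains $\Delta_f b_\pm\ge 0$ throughout $M$ in the barrier sense.

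Granted this, $b_++b_-$ is $\Delta_f$-subharmonic and attains its interior maximum value $0$ along $\gamma$, so the strong maximum principle for the uniformly elliptic operator $\Delta_f$ yields $b_++b_-\equiv 0$. Hence each $b_\pm$ is simultaneously $\Delta_f$-subharmonic and $\Delta_f$-superharmonic, so $\Delta_f b_\pm=0$, and by elliptic regularity $b_\pm$ is smooth. Applying the weighted Bochner formula
\begin{equation*}
\tfrac{1}{2}\Delta_f|\nabla b|^2=|\mathrm{Hess}\,b|^2+\langle\nabla b,\nabla\Delta_f b\rangle+\mathrm{Ric}_f(\nabla b,\nabla b)
\end{equation*}
to $b:=b_+$, using $\Delta_f b=0$, $|\nabla b|\le 1$ everywhere, and $|\nabla b|=1$ along $\gamma$, the strong maximum principle once more forces $|\nabla b|\equiv 1$, and then Bochner yields $\mathrm{Hess}\,b\equiv 0$ and $\mathrm{Ric}_f(\nabla b,\nabla b)\equiv 0$. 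Therefore $\nabla b$ is a parallel unit vector field, which integrates to an isometric splitting $M=\mathbb{R}\times N$ with $N=b^{-1}(0)$. Since $\mathbb{R}\times N$ has more than one end only when $N$ is compact, the two-ends hypothesis forces $N$ to be compact. The main obstacle throughout is the middle step: under merely linear growth of $f$ the $\partial_r f$ contribution in the weighted Laplacian comparison is irremovable in general, and only the sharpness of $h_f(M)=a$, combined with the new comparison of this paper, identifies the extremal behavior of $f$ along $\gamma$ that makes this contribution cancel in the Busemann limit.
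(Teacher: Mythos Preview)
Your overall architecture (line from two ends, Busemann functions, maximum principle, Bochner) matches the paper, but the core step is misidentified and the claimed inequalities are actually false. You assert that the entropy hypothesis forces $\partial_t f(\gamma(t))\to -a$ along a line already obtained from the two-ends assumption, and that this produces $\Delta_f b_\pm\ge 0$. Neither holds. In the paper the logic runs the other way: one first uses $h_f(M)=a$ to locate a sequence $x_k\to\infty$ with $f(x_k)/d(p,x_k)\le -a+\varepsilon$ (if no such sequence existed the weighted entropy would drop below $a$), and only then builds the line so that $\gamma(-t_k)=x_k$. An arbitrary line coming from the two-ends hypothesis need not see this behavior of $f$ at all.

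With the line so chosen, the paper's endpoint comparison
\[
\Delta_f d(q,x)\le \frac{n-1}{r}-\frac{1}{r}\ln\frac{J(q,r,\xi)}{r^{n-1}}-\frac{f(x)-f(q)}{r}
\]
is applied with $q=y_k=\gamma(t_k)$ and with $q=x_k=\gamma(-t_k)$. From $|f(y_k)|\le (a+\varepsilon)r_k+b$ one gets only $\Delta_f\beta^+\ge -a$, while the special choice $f(x_k)\le(-a+\varepsilon)d(p,x_k)$ yields $\Delta_f\beta^-\ge a$. These are \emph{asymmetric}; no cancellation makes each Busemann function $f$-subharmonic. Indeed the final equation is $\Delta_f\beta^+=-a$, not $0$, so your conclusion $\Delta_f b_\pm=0$ cannot be right. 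What is $f$-subharmonic is the sum $\beta=\beta^++\beta^-$, and the maximum principle gives $\beta\equiv 0$, hence $\Delta_f\beta^+=-a$ exactly. The Bochner step still goes through because $\nabla\Delta_f\beta^+=0$. A further technical point you omit: the comparison carries a $-\frac{1}{r}\ln J$ term, which is not pointwise controlled; the paper handles it in the distributional sense via the elementary bound $J\ln J\ge -e^{-1}$ after integrating against a test function.
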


Here, $f$ is said to grow linearly on $M$ if
\begin{equation*}
\left\vert f\right\vert \left( x\right) \leq \alpha r\left( x\right) +\beta
\end{equation*}%
for some constants $\alpha ,\beta >0.$ The infimum value of all such $\alpha $ is then called the linear growth rate of $f.$ The
weighted volume entropy of $\left( M,g,e^{-f}dv\right)$ is defined as%
\begin{equation}
h_{f}\left( M\right) :=\limsup_{r\rightarrow \infty }\frac{\ln \mathrm{Vol}%
_{f}\left( B_{p}\left( r\right) \right) }{r}.  \label{ve}
\end{equation}%
It is known if $\mathrm{Ric}_{f}\geq 0$ and $f$ has linear growth rate
$a,$ then (see \cite{MW})%
\begin{equation*}
h_{f}\left( M\right) \leq a.
\end{equation*}%
So our theorem deals with the case that $h_{f}\left( M\right)$ is maximal.

Note by \cite{B, LW1} the following inequality holds true in general.
\begin{equation}
\lambda _{1}\left( \Delta _{f}\right) \leq \frac{1}{4}h_{f}^{2}\left(
M\right),  \label{ineq}
\end{equation}%
where $\lambda _{1}\left( \Delta _{f}\right) $ is the bottom spectrum of the weighted Laplacian $\Delta_f.$
In particular, if $\lambda _{1}\left( \Delta _{f}\right)$ achieves its maximal value $\frac{a^2}{4},$ then $h_{f}\left( M\right)=a.$
Previously, we have proved the theorem in \cite{MW} under the stronger assumption
that $\lambda _{1}\left( \Delta _{f}\right) $ is maximal.

In the case $f$ is allowed to be of quadratic growth, we have the following result.

\begin{theorem}
\label{Split}Let $\left( M,g,e^{-f}dv\right) $ be a smooth metric measure
space of dimension $n$ with $\mathrm{Ric}_{f}\geq \frac{1}{2}.$ Assume
that $f$ satisfies the upper bound%
\begin{equation*}
f\left( x\right) \leq \frac{1}{4}d\left( x,K\right) ^{2}+C\text{ \ on }M
\end{equation*}%
for some constant $C>0$ and compact set $K.$ Then there is no line passing
through $K$ or $M$ is isometric to $\mathbb{R}\times N.$ In particular,
if $M\backslash K$ has at least two unbounded components, then $M=\mathbb{R}\times N$
for some compact manifold $N.$
\end{theorem}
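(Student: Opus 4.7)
The plan is to run a Cheeger--Gromoll style splitting argument adapted to the Bakry--\'Emery setting with quadratic weight. For the ``in particular'' assertion, if $M\setminus K$ has two unbounded components $E_1,E_2$, pick $p_i\in E_1$ and $q_i\in E_2$ with $d(p_i,K),d(q_i,K)\to\infty$. Every minimizing geodesic $\sigma_i$ from $p_i$ to $q_i$ must cross the compact separator $K$; pick a crossing point $z_i\in\sigma_i\cap K$, pass to a subsequence with $z_i\to z_\infty\in K$, reparametrize the $\sigma_i$ by arc length centered at $z_i$, and extract an Arzel\`a--Ascoli limit to obtain a unit-speed line $\gamma\colon\mathbb{R}\to M$ with $\gamma(0)=z_\infty\in K$. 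Hence it suffices to prove the main dichotomy: if a line through $K$ exists, then $M$ splits isometrically as $\mathbb{R}\times N$ (the compactness of $N$ in the second assertion then follows, because $\mathbb{R}\times N$ with $N$ non-compact, minus any compact set, admits only one unbounded component).

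Assume $\gamma$ is a line through $K$ and define the Busemann functions $b^\pm(x)=\lim_{t\to\infty}\bigl(t-d(x,\gamma(\pm t))\bigr)$. They are $1$-Lipschitz, satisfy $b^+ + b^-\le 0$ on $M$ (from $d(\gamma(t),\gamma(-t))=2t$), and $b^+ + b^-=0$ along $\gamma$. The key analytic input is the weighted Laplacian comparison developed for the proof of Theorem \ref{Vol}: applying Bochner to $\rho_t^\pm(x):=d(x,\gamma(\pm t))$ and using $\mathrm{Ric}_f\geq\frac{1}{2}$ yields the Riccati-type inequality $\partial_\rho\,\Delta_f\rho_t^\pm\le-|\mathrm{Hess}\,\rho_t^\pm|^2-\frac{1}{2}$, and the quadratic upper bound $f\le\frac{1}{4}d(\cdot,K)^2+C$ is precisely what controls the otherwise unbounded $\langle\nabla f,\nabla\rho_t^\pm\rangle$ contribution on the approach to $\gamma(\pm t)$. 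Passing to the Busemann limit $t\to\infty$ in the barrier sense, one obtains
\begin{equation*}
\Delta_f b^\pm + \tfrac{1}{2}\,b^\pm\ \ge\ 0\qquad\text{on }M.
\end{equation*}

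Adding the two inequalities, $\Delta_f(b^+ + b^-)\ge-\frac{1}{2}(b^+ + b^-)\ge 0$, so $b^+ + b^-$ is $\Delta_f$-subharmonic, bounded above by $0$, and attains $0$ on $\gamma$. The strong maximum principle forces $b^+ + b^-\equiv 0$, and subtracting the two original inequalities makes each an equality, so $b:=b^+=-b^-$ satisfies $\Delta_f b+\frac{1}{2}b=0$; elliptic regularity gives $b\in C^\infty(M)$. The weighted Bochner formula applied to $b$, using $\langle\nabla b,\nabla\Delta_f b\rangle=-\tfrac{1}{2}|\nabla b|^2$, gives
\begin{equation*}
\tfrac{1}{2}\,\Delta_f|\nabla b|^2\ =\ |\mathrm{Hess}(b)|^2+\mathrm{Ric}_f(\nabla b,\nabla b)-\tfrac{1}{2}|\nabla b|^2\ \ge\ |\mathrm{Hess}(b)|^2,
\end{equation*}
since $\mathrm{Ric}_f\ge\frac{1}{2}$ and $|\nabla b|^2\le 1$. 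So $|\nabla b|^2$ is $\Delta_f$-subharmonic, bounded by $1$, and equals $1$ on $\gamma$; the strong maximum principle gives $|\nabla b|^2\equiv 1$, and then the display forces $\mathrm{Hess}(b)\equiv 0$. The parallel unit vector field $\nabla b$ integrates to an isometric $\mathbb{R}$-action, yielding the splitting $M=\mathbb{R}\times N$ with $N=b^{-1}(0)$.

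The main obstacle is the comparison step producing $\Delta_f b^\pm+\frac{1}{2}b^\pm\ge 0$ in the quadratic-weight regime. The difficulty is that $\langle\nabla f,\nabla\rho_t\rangle$ can grow linearly in $t$, so the naive comparison does not produce a bounded quantity in the limit; one must show that this growth combines precisely with the $-\frac{1}{2}$ drift coming from $\mathrm{Ric}_f\ge\frac{1}{2}$ and with the $b^\pm/2$ zeroth-order term to yield a clean limit. This is analogous to the role of Hamilton's identity $|\nabla f|^2+S=f+\mathrm{const}$ in shrinking soliton arguments; here the quadratic upper bound $f\le\frac{1}{4}d(\cdot,K)^2+C$ is the substitute and must be carefully exploited when integrating the Riccati inequality along geodesics approaching $\gamma(\pm t)$.
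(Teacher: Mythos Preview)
Your outline is correct and matches the paper's strategy: derive $\Delta_f b^\pm \ge -\tfrac12 b^\pm$ in the distributional sense, sum to conclude $b^++b^-\equiv 0$, and then run Bochner to get $\mathrm{Hess}(b)=0$. One cosmetic difference: to force $b^++b^-\equiv 0$ you use the strong maximum principle, whereas the paper observes $\beta=b^++b^-\in L^2(e^{-f}dv)$ (from the Gaussian decay of weighted area under $\mathrm{Ric}_f\ge\tfrac12$) and integrates $\Delta_f\beta\ge-\tfrac12\beta$ over $M$. Your route works and is in fact what the paper does in the linear-weight case (Theorem~\ref{MaxEntropy}).

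The substantive issue is the step you flag as the main obstacle. Your Riccati sketch, integrating $\partial_\rho(\Delta_f\rho_t)\le-\tfrac12$ from some $r_0>0$, leaves the initial value $\Delta_f\rho_t(r_0)=\Delta\rho_t(r_0)-\langle\nabla f,\nabla\rho_t\rangle$ uncontrolled: you have only an \emph{upper} bound on $f$, no bound on $|\nabla f|$ near $\gamma(\pm t)$, and no Ricci lower bound to control $\Delta\rho_t(r_0)$. The paper's mechanism is the refined comparison \eqref{vu6},
\[
\Delta_f d(\gamma(t),x)\ \le\ \frac{n-1}{r}-\frac{r}{4}+\frac{f(\gamma(t))-f(x)}{r}-\frac{1}{r}\ln\frac{J(\gamma(t),r,\xi)}{r^{n-1}},
\]
which involves $f$ \emph{only at the two endpoints}. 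The quadratic hypothesis $f(\gamma(t))\le\tfrac14 t^2+C$ then yields the exact cancellation $-\tfrac{r}{4}+\tfrac{f(\gamma(t))}{r}\le\tfrac12(t-r)+o(1)$ on compacta, producing the $\tfrac12 b^\pm$ term. The residual $-\tfrac1r\ln(J/r^{n-1})$ has no pointwise sign, but after multiplying by a nonnegative test function $\phi$ and writing $dv=J\,dr\,d\xi$, the paper uses the elementary bound $J\ln J\ge -e^{-1}$ to control $\int\tfrac1r(\ln J)\,\phi\,J\,dr\,d\xi$ from below by $-C(\sup\phi)/t$, which vanishes as $t\to\infty$. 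This distributional handling of the $\ln J$ term is the technical heart of the comparison and is not visible from the bare Bochner--Riccati inequality for $\Delta_f\rho$; without it your integration scheme does not close.
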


For gradient shrinking Ricci solitons, it is known \cite{CZ} that $f$
satisfies
\begin{equation*}
\left( \frac{1}{2}r\left( x\right) -c\right) ^{2}\leq f\left( x\right) \leq
\left( \frac{1}{2}r\left( x\right) +c\right) ^{2}
\end{equation*}%
However, at this point, it is unclear to us how to apply our result to study
shrinking Ricci solitons.

The paper is organized as follows. In Section \ref{VU}, after establishing
some new Laplacian comparison results, we prove
the volume upper bound estimates in Theorem \ref{Vol1} and \ref{Vol}. We then prove the splitting result Theorem \ref{Ent} in Section \ref{lin} and Theorem \ref{Split} in Section \ref{qua}. In Section \ref{VL}, we prove the lower bound estimate in Theorem \ref{Vol}.

\section{\label{VU}Comparison geometry and volume upper bound}

In this section, we will first establish a version of Laplace comparison
theorem in terms of $\mathrm {Ric}_f$ lower bound. The result will then be used to
derive some sharp volume upper bounds. The comparison theorem will also be
invoked throughout the rest of the paper.

Let $\left( M,g,e^{-f}dv\right) $ be a smooth metric measure space with
\begin{equation*}
\mathrm{Ric}_{f}\geq \lambda,
\end{equation*}%
where $\lambda$ is a constant and usually normalized
by scaling the metric $g$ as $\lambda=\frac{1}{2},\ 0,\ -\frac{1}{2}.$
For a fixed point $p\in M,$ we denote the volume form in geodesic coordinates by
\begin{equation*}
dV|_{\exp _{p}\left( r\xi \right) }=J\left( p,r,\xi \right) drd\xi
\end{equation*}%
for $r>0$ and $\xi \in S_{p}M,$ the unit tangent sphere at $p.$ It is well
known that if $x\in M$ is any point outside the cut locus of $p,$ such that $%
x=\exp _{p}$ $\left( r\xi \right) ,$ then
\begin{equation*}
\Delta d\left( p,x\right) =\frac{d}{dr}\ln J\left( p,r,\xi \right) .
\end{equation*}%
In the following, we will omit the dependence of these quantities on $p$ and
$\xi .$ Along a minimizing geodesic $\gamma $ starting from $p$, we have
(see e.g. \cite{L1})
\begin{equation}
m^{\prime }\left( r\right) +\frac{1}{n-1}m^{2}\left( r\right) +\mathrm{Ric}%
\left( \frac{\partial }{\partial r},\frac{\partial }{\partial r}\right) \leq
0,  \label{vu1}
\end{equation}%
where the differentiation is with respect to the $r$ variable and we have
denoted $m\left( r\right) :=\frac{d}{dr}\ln J\left( r\right) .$ As
in \cite{WW}, multiplying (\ref{vu1}%
) by $r^{2}$ and integrating from $r=0$ to $r=t>0,$ we get%
\begin{equation}
\int_{0}^{t}m^{\prime }\left( r\right) r^{2}dr+\frac{1}{n-1}%
\int_{0}^{t}m^{2}\left( r\right) r^{2}dr+\lambda %
\int_{0}^{t}r^{2}dr\leq \int_{0}^{t}f^{\prime \prime }\left( r\right)
r^{2}dr,  \label{vu2}
\end{equation}%
where we have used $\mathrm{Ric}_{f}\geq \lambda.$ Here,
\begin{equation*}
f^{\prime \prime }\left( r\right) :=\mathrm{Hess}_{f}\left( \frac{\partial }{%
\partial r},\frac{\partial }{\partial r}\right) =\frac{d^{2}}{dr^{2}}\left(
f\circ \gamma \right) \left( r\right) .
\end{equation*}%
Integrating the first and the last terms in (\ref{vu2}) by parts and
rearranging terms, we arrive at%
\begin{gather*}
m\left( t\right) t^{2}+\frac{1}{n-1}\int_{0}^{t}\left( m\left( r\right)
r-\left( n-1\right) \right) ^{2}dr\leq \left( n-1\right) t-\frac{\lambda }{3}%
t^{3} \\
+t^{2}f^{\prime }\left( t\right) -2\int_{0}^{t}f^{\prime }\left( r\right)
rdr.
\end{gather*}%
In particular, after discarding the square term on the left hand side, we
obtain as in \cite{FLZ, WW}%
\begin{equation}
m\left( t\right) \leq \frac{n-1}{t}-\frac{\lambda }{3}t+f^{\prime }\left(
t\right) -\frac{2}{t^{2}}\int_{0}^{t}sf^{\prime }\left( s\right) ds.
\label{vu3}
\end{equation}%
Formula (\ref{vu3}) has a drawback that it involves $\nabla f$ twice with
no obvious cancelations. Moreover, the last term requires information of $\nabla f$
on the whole geodesic $\gamma.$ We now perform the following computation to resolve these points. First, integrate (\ref{vu3}) from $t=\varepsilon $ to $%
t=r>\varepsilon $ for $\varepsilon >0$ small. Then
\begin{gather}
\ln J\left( r\right) -\ln J\left( \varepsilon \right) \leq \left( n-1\right)
\left( \ln r-\ln \varepsilon \right) -\frac{\lambda }{6}\left(
r^{2}-\varepsilon ^{2}\right)  \label{vu4} \\
+f\left( r\right) -f\left( \varepsilon \right) -\int_{\varepsilon }^{r}\frac{%
2}{t^{2}}\left( \int_{0}^{t}sf^{\prime }\left( s\right) ds\right) dt.  \notag
\end{gather}%
Integrating by parts implies %
\begin{equation*}
-\int_{\varepsilon }^{r}\frac{2}{t^{2}}\left( \int_{0}^{t}sf^{\prime }\left(
s\right) ds\right) dt=\frac{2}{t}\left( \int_{0}^{t}sf^{\prime }\left(
s\right) ds\right) \left\vert _{t=\varepsilon }^{t=r}\right.
-2\int_{\varepsilon }^{r}f^{\prime }\left( t\right) dt
\end{equation*}%
Plugging this into (\ref{vu4}) and letting $\varepsilon \rightarrow 0,$
we conclude%
\begin{equation*}
\ln J\left( r\right) \leq \left( n-1\right) \ln r-\frac{\lambda }{6}%
r^{2}-\left( f\left( r\right) -f\left( 0\right) \right) +\frac{2}{r}%
\int_{0}^{r}sf^{\prime }\left( s\right) ds.
\end{equation*}%
Rearranging terms, we have%
\begin{equation*}
-\frac{2}{r^{2}}\int_{0}^{r}sf^{\prime }\left( s\right) ds\leq -\frac{1}{r}%
\ln \left( \frac{J\left( r\right) }{r^{n-1}}\right) -\frac{\lambda }{6}r-%
\frac{1}{r}\left( f\left( r\right) -f\left( 0\right) \right).
\end{equation*}%
After plugging into (\ref{vu3}), one obtains
\begin{equation}
m\left( r\right) \leq \frac{n-1}{r}-\frac{\lambda }{2}r+f^{\prime }\left(
r\right) -\frac{1}{r}\ln \left( \frac{J\left( r\right) }{r^{n-1}}\right) -%
\frac{1}{r}\left( f\left( r\right) -f\left( 0\right) \right) .  \label{vu5}
\end{equation}%
This holds for any $0\leq r\leq R$. The importance of (\ref{vu5}) lies in
the the fact that it now involves $f$ only at the two endpoints $p$ and $x$ of the geodesic $\gamma .$

Obviously, we can rewrite (\ref{vu5}) into %
\begin{equation}
\Delta _{f}d\left( p,x\right) \leq \frac{n-1}{r}-\frac{\lambda }{2}r-\frac{1%
}{r}\ln \left( \frac{J\left( p,r,\xi \right) }{r^{n-1}}\right) -\frac{1}{r}%
\left( f\left( x\right) -f\left( p\right) \right) ,  \label{vu6}
\end{equation}%
where $x=\exp _{p}\left( r\xi \right) .$ Let us stress that (\ref{vu5}) and (\ref{vu6}) use
only $\mathrm{Ric}_{f}\geq \lambda.$

To summarize, we have

\begin{proposition} Let $\gamma$ be a minimizing normal geodesic in $M$ with
$p=\gamma (0)$ and $x=\gamma (r).$ If $\mathrm{Ric}_{f}\geq \lambda,$
then

\begin{equation}
\Delta d\left( p,x\right)\leq \frac{n-1}{r}-\frac{\lambda }{2}r+f^{\prime }\left(
r\right) -\frac{1}{r}\ln \left( \frac{J\left( r\right) }{r^{n-1}}\right) -%
\frac{1}{r}\left( f\left( r\right) -f\left( 0\right) \right)
\end{equation}%
and
\begin{equation}
\Delta _{f}d\left( p,x\right) \leq \frac{n-1}{r}-\frac{\lambda }{2}r-\frac{1%
}{r}\ln \left( \frac{J\left( p,r,\xi \right) }{r^{n-1}}\right) -\frac{1}{r}%
\left( f\left( x\right) -f\left( p\right) \right).
\end{equation}%
\end{proposition}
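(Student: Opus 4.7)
The plan is to derive both inequalities simultaneously by applying the Bochner formula to the distance function $r(x) = d(p,x)$ along a minimizing geodesic $\gamma$ from $p$ to $x$. I would start from the standard Riccati-type inequality (\ref{vu1}), namely $m'(r) + \tfrac{1}{n-1} m^2(r) + \mathrm{Ric}(\partial_r, \partial_r) \leq 0$, where $m(r) = \frac{d}{dr}\ln J(r)$ equals $\Delta r$ off the cut locus. The hypothesis $\mathrm{Ric}_f \geq \lambda$ is then invoked in the form $\mathrm{Ric}(\partial_r,\partial_r) \geq \lambda - f''(r)$.

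The next step would follow Wei--Wylie: multiply the Riccati inequality by $r^2$, integrate from $0$ to $t$, integrate the $m'r^2$ term and the $f''r^2$ term by parts, and discard the nonnegative square term on the left. This yields inequality (\ref{vu3}),
\begin{equation*}
m(t) \leq \frac{n-1}{t} - \frac{\lambda}{3}\,t + f'(t) - \frac{2}{t^2}\int_0^t s f'(s)\,ds,
\end{equation*}
which still contains the unwanted non-local term $\frac{2}{t^2}\int_0^t s f'(s)\,ds$ that records $f$ along the entire interior of $\gamma$.

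The crux is to absorb that non-local term into boundary data. I would integrate (\ref{vu3}) once more, from $t = \varepsilon$ to $t = r$, and then integrate by parts the iterated integral $\int_\varepsilon^r t^{-2}\bigl(\int_0^t s f'(s)\,ds\bigr)\,dt$. The by-parts step converts this into a boundary contribution at $t = r$ (giving back a multiple of $\frac{1}{r}\int_0^r s f'(s)\,ds$) plus the one-dimensional integral $-2\int_\varepsilon^r f'(t)\,dt = -2(f(r) - f(\varepsilon))$. Letting $\varepsilon \to 0$ — using smoothness of $f$ at $p$ so that the boundary term at $\varepsilon$ vanishes and $J(\varepsilon)/\varepsilon^{n-1} \to 1$ — produces an upper bound for $\ln J(r)$. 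Rearranging this bound solves for $-\frac{2}{r^2}\int_0^r s f'(s)\,ds$ in terms of $\ln(J(r)/r^{n-1})$ and the boundary values $f(r), f(0)$. Feeding this back into (\ref{vu3}) eliminates the non-local dependence and delivers inequality (\ref{vu5}), which is precisely the first claim since $\Delta r = m(r)$.

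The second inequality is then a one-line consequence: $\Delta_f r = \Delta r - \langle \nabla f, \partial_r\rangle = \Delta r - f'(r)$, so subtracting $f'(r)$ from both sides cancels that term and yields (\ref{vu6}). The main obstacle is purely the bookkeeping of the integration-by-parts trick in the third step, together with checking that all boundary contributions at $\varepsilon = 0$ decay; everything else is algebraic rearrangement. No additional hypothesis on $f$ is required beyond smoothness, so the inequalities hold under the single assumption $\mathrm{Ric}_f \geq \lambda$, as claimed.
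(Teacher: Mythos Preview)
Your proposal is correct and follows essentially the same route as the paper: derive (\ref{vu3}) via the Wei--Wylie $r^2$-weighted integration of the Riccati inequality, then integrate (\ref{vu3}) from $\varepsilon$ to $r$, integrate the iterated integral by parts to trade the non-local $\int_0^t sf'(s)\,ds$ term for boundary data, let $\varepsilon\to 0$, and substitute the resulting bound for $-\tfrac{2}{r^2}\int_0^r sf'(s)\,ds$ back into (\ref{vu3}) to obtain (\ref{vu5}); (\ref{vu6}) is then immediate. The bookkeeping concerns you flag (boundary terms at $\varepsilon=0$, the $-\lambda/3$ becoming $-\lambda/2$ after the substitution) are exactly what the paper handles, and nothing is missing.
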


We are now ready to prove the first part of Theorem \ref{Vol} which is restated
below. The lower bound estimate of the volume will be handled in section 5.

\begin{theorem}
\label{Vol_Up}Let $\left( M,g,e^{-f}dv\right) $ be a complete smooth metric
measure space of dimension $n$. Assume that $\mathrm{Ric}_{f}\geq \frac{1}{2}
$ and $\left\vert \nabla f\right\vert ^{2}\leq f.$ Then $\left( M,g\right) $
has at most Euclidean volume growth, i.e.
\begin{equation*}
\mathrm{Vol}\left( B_{p}\left( R\right) \right) \leq C_{0}R^{n}
\end{equation*}%
for any $R>0$ and $p\in M,$ where the constant $C_{0}$ can be chosen as $%
C_{0}=c\left( n\right) e^{f\left( p\right) }.$
\end{theorem}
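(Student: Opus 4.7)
The plan is to integrate the Laplacian comparison inequality \eqref{vu5} along a minimizing geodesic $\gamma$ from $p$ and convert it into a pointwise bound on the volume element $J(r,\xi)$. Writing $u(r):=\ln(J(r)/r^{n-1})$, so that $m(r) = u'(r) + (n-1)/r$, and specializing \eqref{vu5} to $\lambda=\tfrac12$, I would rearrange to
\begin{equation*}
(ru(r))' \leq -\frac{r^{2}}{4} + rf'(r) - f(r) + f(p).
\end{equation*}
Integrating from $0$ to $R$, using $\lim_{r\to 0^{+}} ru(r) = 0$ and the integration-by-parts identity $\int_0^R rf'(r)\,dr = Rf(R) - \int_0^R f(r)\,dr$, gives
\begin{equation*}
Ru(R) \leq -\frac{R^{3}}{12} + Rf(R) - 2\int_0^R f(r)\,dr + Rf(p).
\end{equation*}

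The heart of the matter is to show $Rf(R) - 2\int_0^R f(r)\,dr \leq R^{3}/12$, using only the hypothesis $|\nabla f|^{2}\leq f$. This hypothesis forces $g(r) := \sqrt{f(\gamma(r))}$ to be $\tfrac12$-Lipschitz along $\gamma$, since $|g'| = |f'|/(2\sqrt{f}) \leq |\nabla f|/(2\sqrt{f}) \leq \tfrac12$ wherever $f>0$. I would then introduce
\begin{equation*}
\psi(R) := Rg(R)^{2} - 2\int_0^R g(r)^{2}\,dr - \frac{R^{3}}{12},
\end{equation*}
and observe
\begin{equation*}
\psi'(R) = -g(R)^{2} + 2Rg(R)g'(R) - \frac{R^{2}}{4} = -(g(R)-Rg'(R))^{2} - R^{2}\bigl(\tfrac{1}{4} - g'(R)^{2}\bigr) \leq 0.
\end{equation*}
Since $\psi(0)=0$, this yields $\psi(R)\leq 0$, which is exactly the inequality sought; substituting $g^{2}=f$ gives $Rf(R) - 2\int_0^R f(r)\,dr \leq R^{3}/12$.

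Combining the two estimates gives $u(R) \leq f(p)$, equivalently $J(r,\xi) \leq r^{n-1}e^{f(p)}$ for every $r$ inside the cut locus in direction $\xi$. Integrating in polar coordinates (and discarding the cut locus, which has measure zero) yields
\begin{equation*}
\mathrm{Vol}(B_{p}(R)) \leq \int_{S_{p}M}\int_{0}^{R} r^{n-1}e^{f(p)}\,dr\,d\xi = \frac{\omega_{n-1}}{n}\,e^{f(p)}R^{n},
\end{equation*}
which is the desired bound with $c(n) = \omega_{n-1}/n$. The main obstacle is identifying the correct monotone quantity $\psi$: the cubic $R^{3}/12$ in its definition must precisely cancel the curvature term from the integrated Laplace comparison, and the factorization of $\psi'$ as a sum of two nonpositive squares is what converts the Hamilton-type inequality $|\nabla f|^{2}\leq f$ into the sharp Euclidean volume upper bound.
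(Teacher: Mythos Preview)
Your argument is correct, but it is more elaborate than the paper's. Both proofs start from the Laplacian comparison \eqref{vu5} with $\lambda=\tfrac12$ and rewrite it as
\[
(ru(r))' \;\le\; -\tfrac{r^{2}}{4}+rf'(r)-f(r)+f(p),
\]
with $u(r)=\ln\bigl(J(r)/r^{n-1}\bigr)$. The difference is in how the hypothesis $|\nabla f|^{2}\le f$ is used to control the right-hand side. You integrate first, obtaining the term $Rf(R)-2\int_{0}^{R}f\,dr-\tfrac{R^{3}}{12}$, and then introduce the auxiliary function $\psi(R)$ whose derivative factors as a sum of two nonpositive terms; this is a nice observation. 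The paper instead makes a single \emph{pointwise} estimate before integrating: from $f'(r)\le|\nabla f|\le\sqrt{f(r)}$ and Young's inequality $\sqrt{f}\le \tfrac{r}{4}+\tfrac{f}{r}$ one gets $rf'(r)\le \tfrac{r^{2}}{4}+f(r)$, so the entire right-hand side collapses immediately to $f(p)$, yielding $(ru)'\le f(p)$ and hence $u(R)\le f(p)$. Your integrated inequality $\psi(R)\le 0$ is exactly the integral of this pointwise bound, so the two arguments are equivalent in strength; the paper's version is simply the one-line shortcut that avoids introducing $\psi$. A minor technical remark: your computation of $\psi'$ uses $g'(R)$, and $g=\sqrt{f}$ may fail to be differentiable where $f$ vanishes; since $|\nabla f|^{2}\le f$ makes $2\sqrt{f}$ globally $1$-Lipschitz, $\psi$ is still absolutely continuous and the inequality $\psi'\le 0$ holds a.e., which suffices.
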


\begin{proof}[Proof of Theorem \protect\ref{Vol_Up}]
For fixed $p\in M,$  according to (\ref{vu5}), we have:%
\begin{equation}
m\left( r\right) \leq \frac{n-1}{r}-\frac{1}{4}r+f^{\prime }\left( r\right) -%
\frac{1}{r}\ln \left( \frac{J\left( r\right) }{r^{n-1}}\right) -\frac{1}{r}%
\left( f\left( r\right) -f\left( 0\right) \right) .  \label{vu9}
\end{equation}%
Since $\left\vert \nabla f\right\vert ^{2}\leq f,$ we have:%
\begin{equation*}
f^{\prime }\left( r\right) \leq \frac{1}{4}r+\frac{1}{r}\left\vert \nabla
f\right\vert ^{2}\leq \frac{1}{4}r+\frac{1}{r}f(r).
\end{equation*}%
Plugging this into (\ref{vu9}) leads to
\begin{equation}
m\left( r\right) -\frac{n-1}{r}+\frac{1}{r}\ln \left( \frac{J\left( r\right)
}{r^{n-1}}\right) \leq \frac{f\left( p\right) }{r}.  \label{vu10}
\end{equation}%
Since $m\left( r\right) :=\frac{d}{dr}\ln J\left( r\right) ,$ we see that (%
\ref{vu10}) becomes
\begin{equation}
\left( rH\left( r\right) \right) ^{\prime }\leq f\left( p\right) ,
\label{vu11}
\end{equation}%
where $H\left( r\right) :=\ln \frac{J\left( r\right) }{r^{n-1}}.$
Integrating (\ref{vu11}) from $r=0$ to $r=R$ implies%
\begin{equation*}
J\left( R\right) \leq e^{f\left( p\right) }R^{n-1} \ \ \text{for all }R>0.
\end{equation*}%
This proves an area estimate by integrating on $S_{p}M.$ The claimed volume
estimate follows immediately.
\end{proof}

Let us point out that for shrinking Ricci solitons, which satisfy the
equation $\mathrm{Ric}_{f}=\frac{1}{2},$ one has $\left\vert \nabla f\right\vert ^{2}\leq f.$
Also, if $p$ is chosen to be a minimum point of $f,$ then
$f\left( p\right) \leq \frac{n}{2}$ and the constant $C_{0}$ in Theorem \ref%
{Vol_Up} depends only on $n$ as shown in \cite{HM}.

We now prove Theorem \ref{Vol1}. For this, we will follow the
argument in Lemma 2.1 of \cite{MW1}. First, let us point out that if $\left(
M,g,e^{-f}dv\right) $ has $\mathrm{Ric}_{f}\geq \frac{1}{2}$ and there exist
$\alpha ,\beta >0$ so that
\begin{equation*}
\ \left\vert \nabla f\right\vert \left( x\right) \leq \alpha r\left(
x\right) +\beta \text{ on }M,
\end{equation*}%
then $\alpha \geq \frac{1}{2}.$ This follows by a standard argument using
the second variation formula of arc-length as in the proof of Proposition \ref{f}
below. More exactly, this follows immediately from (\ref%
{f1}).

\begin{theorem}
\label{VolU}Let $\left( M,g,e^{-f}dv\right) $ be a complete smooth metric
measure space of dimension $n$ such that $\mathrm{Ric}_{f}\geq \lambda.$
Assume that there exist $\alpha ,\beta \geq 0$ so that
\begin{equation*}
\ \left\vert \nabla f\right\vert \left( x\right) \leq \alpha r\left(
x\right) +\beta \text{ on }M.
\end{equation*}%
(a) If $\alpha =\lambda,$ then there exist positive constants $C_1, C_2>0$ so
that
\begin{equation*}
\mathrm{Vol}\left( B_{p}\left( R\right) \right) \leq C_1\,e^{C_2\,\sqrt{R}} \text{ \
\ for }R>0.
\end{equation*}%
(b) If $\alpha >\lambda,$ then there exists a positive constant $C>0$ so
that
\begin{equation*}
\mathrm{Vol}\left( B_{p}\left( R\right) \right) \leq Ce^{\sqrt{\left(
n-1\right) \left( \alpha -\lambda\right) }\,R} \text{ \ \ for }R>0.
\end{equation*}
\end{theorem}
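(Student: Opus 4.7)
The plan is to integrate the Riccati inequality along minimizing geodesics from $p$ and then apply Cauchy-Schwarz to control the logarithm of the Jacobian of geodesic polar coordinates.

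Along any minimizing geodesic $\gamma$ from $p$, writing $m(r) = (\ln J(r,\xi))'$, the Riccati inequality (\ref{vu1}) together with $\mathrm{Ric}_f \ge \lambda$ yields
\begin{equation*}
m'(r) + \frac{1}{n-1}m^2(r) \le f''(r) - \lambda.
\end{equation*}
Fixing a small $r_0>0$ and integrating from $r_0$ to $R$ gives
\begin{equation*}
m(R) - m(r_0) + \frac{1}{n-1}\int_{r_0}^R m^2(r)\,dr \le f'(R) - f'(r_0) - \lambda(R-r_0).
\end{equation*}
The hypothesis $|\nabla f|(x) \le \alpha r(x)+\beta$ gives $|f'(R)| \le \alpha R+\beta$, so combined with a uniform lower bound on $m(R)$ (obtained by a cut-off on the geodesic parameter to avoid the cut locus of $p$), we arrive at
\begin{equation*}
\int_{r_0}^R m^2(r)\,dr \le (n-1)(\alpha-\lambda)R + C_1,
\end{equation*}
where $C_1$ depends only on $n, \alpha, \beta, \lambda, r_0$ and the initial data $m(r_0), f'(r_0)$.

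By Cauchy-Schwarz,
\begin{equation*}
\left(\int_{r_0}^R m(r)\,dr\right)^{2} \le (R-r_0)\int_{r_0}^R m^2(r)\,dr \le (n-1)(\alpha-\lambda)R^2 + O(R),
\end{equation*}
so $\int_{r_0}^R m(r)\,dr \le \sqrt{(n-1)(\alpha-\lambda)}\,R + O(1)$. Since $(\ln J(r,\xi))' = m(r)$, this gives the uniform estimate $J(R,\xi) \le C\,e^{\sqrt{(n-1)(\alpha-\lambda)}\,R}$, and integrating on $S_pM$ and in $r\in[0,R]$ yields the desired bound in part~(b). For part~(a) with $\alpha=\lambda$ the leading coefficient vanishes and the Cauchy-Schwarz estimate reduces to $\int_{r_0}^R m(r)\,dr \le C_2\sqrt R + C_1$, which upon integrating the resulting pointwise bound $J(R,\xi) \le Ce^{C_2\sqrt R}$ produces the sub-exponential estimate $\mathrm{Vol}(B_p(R)) \le C_1 e^{C_2\sqrt R}$.

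The main obstacle is the lower bound on $m(R)$ used in the integrated Riccati step, since $m$ can diverge to $-\infty$ at the cut locus of $p$. This can be handled by a standard cut-off on the geodesic parameter, noting that the cut locus contributes zero to the area and volume integrals. Alternatively, following the approach of Lemma~2.1 in \cite{MW1}, one may retain the full Bochner identity (\ref{vu2}) with the nonnegative square term $\frac{1}{n-1}\int_0^t (mr-(n-1))^2\,dr$ on the left-hand side, which absorbs this difficulty and produces the required control on $\int m^2$ without recourse to a pointwise lower bound on $m$.
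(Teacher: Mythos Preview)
Your integrated Riccati inequality and Cauchy--Schwarz scheme is close to the paper's argument, but the step where you drop $m(R)$ from the left-hand side by invoking ``a uniform lower bound on $m(R)$'' is not justified. There is no such uniform lower bound: even strictly inside the cut locus, $m(R)$ can be arbitrarily negative (and depends on the direction $\xi$), so the passage to $\int_{r_0}^R m^2 \le (n-1)(\alpha-\lambda)R + C_1$ fails as written. The ``cut-off on the geodesic parameter'' does not resolve this, since restricting $R$ away from the cut distance still leaves $m(R)$ uncontrolled from below. Your Cauchy--Schwarz bound on $\int m$ then inherits this gap. (A genuine repair of your route is possible: it suffices to bound $\ln J$ at a point $R_1\in[r_0,R]$ where $\ln J$ attains its maximum, since there $m(R_1)\ge 0$ and your argument goes through; but this is not what you wrote.)

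The paper avoids the issue altogether by \emph{not} discarding $m(r)$. After integrating \eqref{vu1} from $1$ to $r$ and applying Cauchy--Schwarz in the form $\int_1^r m^2 \ge \frac{1}{r}\bigl(\int_1^r m\bigr)^2$, one obtains
\[
m(r) + \frac{1}{(n-1)r}\left(\int_1^r m\right)^2 \le (\alpha-\lambda)r + C_0,
\]
which is a differential inequality for $u(r):=\int_1^r m$. The paper then shows directly, by a first-crossing (ODE comparison) argument with the barrier $v(r)=\sqrt{(n-1)(\alpha-\lambda)}\,r + C - u(r)$, that $u(r)\le \sqrt{(n-1)(\alpha-\lambda)}\,r + C$ for all $r\ge 1$; at a hypothetical first zero of $v$ one has $u'\ge \sqrt{(n-1)(\alpha-\lambda)}$ and $u=\sqrt{(n-1)(\alpha-\lambda)}\,r+C$, which together contradict the inequality above. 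This yields the sharp exponent in (b), and the case $\alpha=\lambda$ is then obtained by applying the estimate with $\alpha'=\lambda+\frac{1}{r}$. Your ``alternative'' paragraph points in this direction but references the wrong identity: the paper does not use the $r^2$-weighted formula \eqref{vu2} here, but rather the unweighted integral of \eqref{vu1} combined with the ODE comparison just described.
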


\begin{proof}[Proof of Theorem \protect\ref{VolU}]
As before, we write $dV|_{\exp _{p}\left( r\xi \right) }=J\left(
p,r,\xi \right) drd\xi $ for $\xi \in S_{p}M.$ We integrate (\ref{vu1}) from
$1$ to $r\geq 1$ and get
\begin{equation*}
m\left( r\right) +\frac{1}{n-1}\int_{1}^{r}\left( m\left( t\right) \right)
^{2}dt\leq f^{\prime }\left( r\right) -\lambda\,(r-1)+m\left( 1\right)
-f^{\prime }\left( 1\right),
\end{equation*}%
where $m\left( r\right) :=\frac{d}{dr}\ln J\left( p,r,\xi \right) $. Using the hypothesis on the growth of $\left\vert \nabla f\right\vert $
and the Cauchy-Schwarz inequality, we conclude that
\begin{equation}
m\left( r\right) +\frac{1}{\left( n-1\right) r}\left( \int_{1}^{r}m\left(
t\right) dt\right) ^{2}\leq \left( \alpha -\lambda\right) r+C_{0},
\label{v1}
\end{equation}%
where $C_{0}>0$ is a constant so that $m\left( 1\right) -f^{\prime
}\left( 1\right) +\lambda +\beta \leq C_{0}$. Let us assume from now on
that $\alpha >\lambda.$

We claim that for any $r\geq 1,$
\begin{equation}
\int_{1}^{r}m\left( t\right) dt\leq \sqrt{\left( n-1\right) \left( \alpha -%
\lambda\right) }\, r+C,  \label{v2}
\end{equation}%
where
\begin{equation*}
C:=\sqrt{\frac{n-1}{\alpha -\lambda}}C_{0}.
\end{equation*}%
To prove this, define
\begin{equation*}
v\left( r\right) :=\sqrt{\left( n-1\right) \left( \alpha -\lambda\right)
}\, r+C-\int_{1}^{r}m\left( t\right) dt.
\end{equation*}%
We show instead that $v\left( r\right) \geq 0$ for all $r\geq 1.$ Clearly, $%
v\left( 1\right) >0.$ Suppose that $v$ does not remain positive for all $%
r\geq 1.$ Let $R>1$ be the first number such that $v\left( R\right) =0.$
By the choice of $R$ we know that $v^{\prime }\left( R\right) \leq 0$.
So we have
\begin{eqnarray*}
\int_{1}^{R}m\left( t\right) dt &=&\sqrt{\left( n-1\right) \left( \alpha -%
\lambda\right) }\,R+C\text{ \ \ and} \\
m\left( R\right) &\geq &\sqrt{\left( n-1\right) \left( \alpha -\lambda%
\right) }.
\end{eqnarray*}%
It is then easy to see that%
\begin{eqnarray*}
\frac{1}{\left( n-1\right) R}\left( \int_{1}^{R}m\left( t\right) dt\right)
^{2} &\geq &\left( \alpha -\lambda\right) R+2C\sqrt{\frac{\alpha -%
\lambda}{n-1}} \\
&>&\left( \alpha -\lambda\right)\,R+C_{0}.
\end{eqnarray*}%
Since $m\left( R\right) >0,$ this is a contradiction to (\ref{v1}). We
have thus proved that (\ref{v2}) is true for any $r\geq 1.$ In other words,
\begin{equation}
\ln J\left( r\right) \leq \sqrt{\left( n-1\right) \left( \alpha -\lambda%
\right) }\, r+\sqrt{\frac{n-1}{\alpha -\lambda}}C_{0}+\ln J\left(
1\right) .  \label{v3}
\end{equation}%
Recall that the constant $C_{0}$ is independent of both $r$ and $\alpha $.
Now (\ref{v3}) clearly implies the conclusion of the
theorem for all $\alpha >\lambda.$

If $\alpha =\lambda,$ we apply (\ref{v3}%
) to $\alpha ^{\prime }:=\lambda+\frac{1}{r}>\lambda,$ which shows
that
\begin{equation*}
\ln J\left( r\right) \leq A\sqrt{r}
\end{equation*}%
for all $r\geq 1.$ The desired result then follows.
\end{proof}

We point out that it is possible to prove Theorem \ref{VolU} by a
similar argument as in Theorem \ref{Vol_Up} by using $\sinh^{2}\sqrt{\frac{\alpha -\lambda}{n-1}}\,r$ instead of $r^{2}$ in (\ref{vu2}). However, the computations now are much more involved. One advantage of doing so is that one also obtains relative volume comparison estimates as well.

As a corollary, we obtain an upper bound for the bottom spectrum of the Laplacian
$\Delta$ on $M.$

\begin{corollary}
Let $\left( M,g,e^{-f}dv\right) $ be a complete smooth metric
measure space of dimension $n$ such that $\mathrm{Ric}_{f}\geq \lambda.$
Assume that there exist $\alpha ,\beta >0$ so that
\begin{equation*}
\ \left\vert \nabla f\right\vert \left( x\right) \leq \alpha r\left(
x\right) +\beta \text{ on }M.
\end{equation*}%
Then the bottom spectrum $\lambda_1(M)$ of the Laplacian
$\Delta$ on $M$ satisfies

\begin{equation*}
\lambda_1(M)\le \frac{(n-1)(\alpha-\lambda)}{4}.
\end{equation*}%
\end{corollary}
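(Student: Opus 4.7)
The plan is to combine the volume upper bound from Theorem \ref{VolU} with a standard volume lower bound in terms of the bottom spectrum of the Laplacian. Specifically, as recalled earlier in the introduction, a result of Li--Wang \cite{LW1} asserts that on any complete Riemannian manifold $M$ one has
\begin{equation*}
\mathrm{Vol}(B_p(R)) \;\geq\; C\, e^{2\sqrt{\lambda_1(M)}\,R} \qquad \text{for all } R>1.
\end{equation*}
So the strategy is simply to compare this exponential lower bound to the upper bounds established in Theorem \ref{VolU}, take logarithms, divide by $R$, and send $R\to\infty$.

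First I would dispose of the generic case $\alpha > \lambda$. Combining the Li--Wang lower bound with Theorem \ref{VolU}(b), one obtains
\begin{equation*}
C\, e^{2\sqrt{\lambda_1(M)}\,R} \;\leq\; \mathrm{Vol}(B_p(R)) \;\leq\; C'\, e^{\sqrt{(n-1)(\alpha-\lambda)}\,R}
\end{equation*}
for all $R>1$. Taking logs, dividing by $R$, and letting $R\to\infty$ yields $2\sqrt{\lambda_1(M)} \leq \sqrt{(n-1)(\alpha-\lambda)}$, which squares to the desired inequality.

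For the borderline case $\alpha = \lambda$, I would apply Theorem \ref{VolU}(a), which gives the much stronger upper bound $\mathrm{Vol}(B_p(R)) \leq C_1 e^{C_2\sqrt{R}}$. The same comparison with the Li--Wang lower bound forces
\begin{equation*}
2\sqrt{\lambda_1(M)}\, R \;\leq\; C_2\sqrt{R} + \ln\!\left(\tfrac{C_1}{C}\right)
\end{equation*}
for all $R>1$, from which dividing by $R$ and letting $R\to\infty$ gives $\lambda_1(M)=0$; this matches $\tfrac{(n-1)(\alpha-\lambda)}{4}=0$.

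There is no real obstacle: once Theorem \ref{VolU} is in hand, the corollary is essentially a one-line consequence of Brooks--type reasoning, with the only bookkeeping being the separate treatment of the critical case $\alpha=\lambda$ where the subexponential upper bound from part (a) forces the bottom spectrum to vanish.
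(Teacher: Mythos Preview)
Your proposal is correct and follows essentially the same approach as the paper: the paper derives the corollary directly by combining the volume upper bound of Theorem \ref{VolU} with the Li--Wang/Brooks lower bound $\mathrm{Vol}(B_p(R))\ge C\,e^{2\sqrt{\lambda_1(M)}\,R}$ recalled in the introduction. Your careful handling of the borderline case $\alpha=\lambda$ is a welcome addition, as the paper leaves this implicit.
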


As pointed out in section 1, both upper bounds for volume growth and the bottom
spectrum are sharp as demonstrated by the hyperbolic spaces.

\section{\label{lin}Splitting for linear weight}

In this section we apply the Laplace comparison theorem established in
Section \ref{VU} to prove a splitting theorem for manifolds with non-negative Bakry-\'{E}mery curvature,
where the weight function is assumed of linear growth.
So let $\left( M,g,e^{-f}dv\right) $ be a smooth metric
measure space with $\mathrm{Ric}_{f}\geq 0.$ We furthermore assume that $f$
satisfies
\begin{equation*}
\left\vert f\right\vert \left( x\right) \leq \alpha r\left( x\right) +\beta ,
\end{equation*}
where $r\left( x\right) :=d\left( p,x\right) $ is the distance to a fixed
point $p\in M,$ and $\alpha ,\beta >0$ are positive constants. The infimum value
of all such $\alpha $ is called to be the linear growth rate of $f.$ For a proof
of the following result, see Theorem 2.1 in \cite{MW} or the argument below
in Theorem \ref{MaxEntropy}.

\begin{theorem}
\label{Entropy}Let $\left( M,g,e^{-f}dv\right) $ be a smooth metric measure
space with $\mathrm{Ric}_{f}\geq 0.$ Assume that $f$ has linear growth rate
$a\geq 0.$
Then the weighted volume entropy $h_f(M)$ of $\left( M,g,e^{-f}dv\right)$
satisfies %
\begin{equation*}
h_{f}\left( M\right) :=\limsup_{r\rightarrow \infty }\frac{\ln \mathrm{Vol}%
_{f}\left( B_{p}\left( r\right) \right) }{r}\le a.
\end{equation*}
\end{theorem}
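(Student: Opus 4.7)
The plan is to apply the Laplace comparison inequality (\ref{vu5}) with $\lambda=0$ along a minimizing normal geodesic $\gamma$ from $p$ to a point outside the cut locus, and to convert it into a sharp growth bound on the weighted Jacobian $J_{f}(r,\xi):=J(r,\xi)\,e^{-f(\gamma(r))}$. Writing $f(r)$ for $f(\gamma(r))$ and subtracting $f'(r)$ from both sides of (\ref{vu5}) recasts the inequality in terms of $m_{f}(r):=(\ln J_{f})'(r)$:
\begin{equation*}
m_{f}(r)\leq \frac{n-1}{r}-\frac{1}{r}\ln\!\left(\frac{J(r)}{r^{n-1}}\right)-\frac{1}{r}\bigl(f(r)-f(p)\bigr).
\end{equation*}
The difficulty is that $f$ appears both inside the logarithm (through the discrepancy between $J$ and $J_{f}$) and as a boundary term; a naive integration of $m_{f}$ would double-count and lose a factor of two. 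The key idea is to absorb both occurrences into a single monotone auxiliary quantity.

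To that end, I would introduce
\begin{equation*}
\Psi(r):=\ln\!\left(\frac{J_{f}(r)}{r^{n-1}}\right)+f(p)=\ln\!\left(\frac{J(r)}{r^{n-1}}\right)-f(r)+f(p),
\end{equation*}
which satisfies $r\Psi'(r)=r\,m_{f}(r)-(n-1)$. Substituting $\ln(J/r^{n-1})=\Psi+f(r)-f(p)$ into the inequality above, multiplying by $r$, and rearranging collapses the two $f$-contributions into a single linear source term, yielding
\begin{equation*}
\bigl(r\Psi(r)\bigr)'\leq -2\bigl(f(r)-f(p)\bigr).
\end{equation*}
Since $\Psi(r)\to 0$ as $r\to 0^{+}$, integrating from $0$ to $r$ gives $r\Psi(r)\leq 2\int_{0}^{r}\bigl(f(p)-f(s)\bigr)\,ds$.

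Next I would fix $\varepsilon>0$ and, by the definition of linear growth rate, choose $\beta>0$ so that $|f|(x)\leq (a+\varepsilon)\,r(x)+\beta$ on $M$. Then $f(p)-f(s)\leq (a+\varepsilon)\,s+2\beta$, whence $\Psi(r)\leq (a+\varepsilon)\,r+4\beta$. Unwinding the definition of $\Psi$ and using $-f(p)\leq\beta$, this gives $J_{f}(r,\xi)\leq C\,r^{n-1}\,e^{(a+\varepsilon)r}$ with $C=e^{5\beta}$ independent of $\xi\in S_{p}M$ (and the standard convention that $J$ vanishes past the cut locus handles cut-locus issues). Integrating in $\xi$ and then in $r\in[0,R]$ produces
\begin{equation*}
\mathrm{Vol}_{f}\bigl(B_{p}(R)\bigr)\leq C'\,R^{n-1}\,e^{(a+\varepsilon)R},
\end{equation*}
and hence $h_{f}(M)\leq a+\varepsilon$. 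Letting $\varepsilon\to 0$ finishes the proof.

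The main obstacle is exactly the middle step: recognizing that, after one subtracts $f'$ from (\ref{vu5}), the correct way to extract volume information is through the twisted derivative $(r\Psi)'$, which cancels the two sources of $f$ down to the single integrand $-2(f(r)-f(p))$. Once this identity is in hand, the linear growth hypothesis on $f$ feeds in precisely to dominate $\int_{0}^{r}(f(p)-f(s))\,ds$ by a quadratic with leading coefficient $a+\varepsilon$, and the rest of the argument is bookkeeping.
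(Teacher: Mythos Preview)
Your proof is correct and is essentially the same as the argument the paper sketches inside the proof of Theorem~\ref{MaxEntropy}: there the authors set $u(r):=\ln\bigl(J_{f}(r)/r^{n-1}\bigr)$ and rewrite (\ref{vu5}) with $\lambda=0$ as $u'(r)+\tfrac{1}{r}u(r)\le \tfrac{f(0)}{r}-\tfrac{2f(r)}{r}$, i.e.\ $(ru)'\le f(0)-2f(r)$, which is exactly your inequality $(r\Psi)'\le -2(f(r)-f(p))$ after the harmless shift $\Psi=u+f(p)$. The rest—plugging in the linear bound $|f|\le (a+\varepsilon)r+\beta$, integrating, and letting $\varepsilon\to 0$—matches as well.
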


The estimate for $h_{f}\left( M\right) $ is in fact sharp for any gradient
steady Ricci soliton by \cite{MS}. Our goal here is to understand the structure of
$M$ when this volume entropy is maximal. Recall that an end of $M$ is any unbounded
component of $M\backslash K$, where $K\subset M$ is a compact smooth domain.
Manifold $M$ is said to be connected at infinity if it has only one end with respect to any such $K.$

We have the following result.

\begin{theorem}
\label{MaxEntropy}Let $\left( M,g,e^{-f}dv\right) $ be a smooth metric
measure space of dimension $n$ with $\mathrm{Ric}_{f}\geq 0.$ Assume that $f$
has linear growth rate $a>0$ and the weighted volume entropy of $%
\left( M,g,e^{-f}dv\right) $ is maximal, that is, $h_{f}\left( M\right) =a.$ Then
either $M$ is connected at infinity or $M$ splits as a direct product $M=%
\mathbb{R}\times N,$ where $N$ is compact.
\end{theorem}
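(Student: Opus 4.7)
The plan is to adapt the Cheeger--Gromoll splitting argument to the weighted setting using the Laplace comparison~(\ref{vu6}). I would assume $M$ has at least two unbounded ends and show that this forces an isometric splitting. The first step is standard: starting from sequences escaping to infinity in two distinct ends, one joins them by minimizing geodesics and extracts a limit to produce a line $\gamma:\mathbb{R}\to M$ through some point $p$ in the separating compact core. Set $\gamma_\pm(t):=\gamma(\pm t)$ and define the Busemann functions
\[
b_\pm(x):=\lim_{t\to\infty}\bigl(t-d(x,\gamma_\pm(t))\bigr).
\]
The triangle inequality along the line gives $b_++b_-\le 0$ on $M$ with equality at $p$, so that $p$ is an interior maximum of $b_++b_-$.

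The heart of the argument is to show that $b_++b_-$ is weakly $\Delta_f$-subharmonic. To estimate $\Delta_f b_\pm(x)$ I would apply (\ref{vu6}) with base point $\gamma_\pm(t)$ and endpoint $x$, and let $t\to\infty$. Since $d(x,\gamma_\pm(t))/t\to 1$, the $(n-1)/d_t(x)$ term vanishes and the $f$-term $-(f(x)-f(\gamma_\pm(t)))/d_t(x)$ tends to $\limsup_t f(\gamma_\pm(t))/t$, which lies in $[-a,a]$ by the linear growth hypothesis. The Jacobian term $-d_t(x)^{-1}\ln\bigl(J/d_t(x)^{n-1}\bigr)$ is controlled by the maximal entropy hypothesis $h_f(M)=a$ combined with Theorem~\ref{Entropy}: excessive growth of the weighted Jacobian along the rays would make the weighted volume strictly exceed $e^{ar}$ along a positive-measure set of directions, contradicting $h_f(M)\le a$. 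A similar entropy argument shows that the two $\limsup$'s sum to at most zero. Adding the two resulting barrier inequalities yields $\Delta_f(b_++b_-)\ge 0$ in the barrier sense on $M$.

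The strong maximum principle then forces $b_++b_-\equiv 0$ on $M$, and every intermediate barrier inequality must hold with equality. This gives $|\nabla b_\pm|\equiv 1$, smoothness of $b_\pm$, and saturation of the Laplace comparison along every geodesic normal to the level sets of $b_+$. The weighted Bochner formula applied to $|\nabla b_+|^2\equiv 1$ then yields $\mathrm{Hess}(b_+)\equiv 0$ and $\mathrm{Ric}_f(\nabla b_+,\nabla b_+)\equiv 0$, so $b_+$ induces an isometric splitting $M=\mathbb{R}\times N$ with $f$ linear in the $\mathbb{R}$-variable of slope $a$. Compactness of $N$ is extracted from the observation that any non-trivial end of $N$ would lift to an additional end of $M$ transverse to $\mathbb{R}$; since the full linear growth rate of $f$ is absorbed in the $\mathbb{R}$-direction, the induced weight on $N$ is bounded, so the classical Cheeger--Gromoll splitting in $N$ applies and forces $N$ to have finite extent.

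The main obstacle is the Laplacian estimate in the second paragraph. With $f$ only of linear growth and $\mathrm{Ric}_f\ge 0$, there is no direct weighted Bishop--Gromov comparison available, so neither the Jacobian term nor the sum of the $f$-growth rates along $\gamma_\pm$ is controlled by the curvature bound alone. The full weight of the saturated entropy hypothesis $h_f(M)=a$ must be invoked to pin both quantities to their critical positions, and this is precisely what distinguishes the present theorem from the earlier result in \cite{MW} where the bottom spectrum of $\Delta_f$ was assumed to be maximal.
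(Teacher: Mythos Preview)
Your outline follows the Cheeger--Gromoll architecture, but the way you invoke the entropy hypothesis $h_f(M)=a$ is where the argument breaks, and this is a genuine gap rather than a detail.

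The paper does \emph{not} start from an arbitrary line. It first uses $h_f(M)=a$ to produce a sequence $x_k\to\infty$ with $f(x_k)/d(p,x_k)\le -a+\varepsilon$: if no such sequence existed, then (\ref{vu5}) would force $J_f(R)\le C(\varepsilon)e^{(a-\varepsilon/2)R}$ and hence $h_f(M)<a$. Only after locating this sequence does one build the line, by taking $x_k$ in one end $E$ and joining it to points $y_k$ in the other end. The payoff is an \emph{asymmetric} pair of inequalities: from the $x_k$ side one gets $\Delta_f\beta^-\ge +a$ (because $f(x_k)/r_k\to -a$), while from the $y_k$ side one only needs the generic bound $\Delta_f\beta^+\ge -a$ coming from the linear growth rate. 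These add to $\Delta_f(\beta^++\beta^-)\ge 0$. In your scheme, with the line fixed in advance, the comparison (\ref{vu6}) gives at best $\Delta_f b_\pm\ge -\limsup_t f(\gamma_\pm(t))/t$, and nothing prevents both limsups from equalling $a$. The entropy hypothesis governs weighted volume growth from a fixed basepoint; it says nothing about how $f$ behaves along a line that was constructed without reference to it. Your assertion that ``a similar entropy argument shows that the two $\limsup$'s sum to at most zero'' is precisely the missing idea, and I do not see how to make it work.

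Your handling of the Jacobian term is also off. The dangerous regime for $-\tfrac{1}{r}\ln(J/r^{n-1})$ in (\ref{vu6}) is when $J$ is \emph{small}, which drives the term to $+\infty$; an entropy upper bound on volume cannot help here, and in any case the Jacobian is centred at the moving point $\gamma_\pm(t)$, not at $p$. The paper's device is entirely elementary and uses no geometric hypothesis: one multiplies by a nonnegative test function $\phi$ and the volume element $J\,dr\,d\xi$, and since $J\ln J\ge -e^{-1}$ pointwise, the integrated contribution of the Jacobian term is bounded below by $-C(\sup_\Omega\phi)/r_k\to 0$. This is how one passes from the pointwise comparison to the distributional inequalities $\Delta_f\beta^+\ge -a$ and $\Delta_f\beta^-\ge a$.
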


\begin{proof}[Proof of Theorem \protect\ref{MaxEntropy}]
We assume that $M$ has at least two ends and prove that M splits off a line as claimed in the theorem.

For a fixed $\varepsilon >0$ and point $p\in M,$  we first show that there exists a sequence of
points $x_{k}\in M$ which goes to $\infty $ and
\begin{equation}
\frac{f\left( x_{k}\right) }{d\left( p,x_{k}\right) }\leq -a+\varepsilon
\text{ \ for all }k.  \label{e1}
\end{equation}%
Indeed, if (\ref{e1}) is not true, then
\begin{equation}
\frac{f\left( x\right) }{d\left( p,x\right) }>-a+\varepsilon   \label{e2}
\end{equation}%
for all $x$ such that $r\left( x\right) \geq r_{0}$ for some $r_{0}>0.$
Let us denote the volume form in geodesic coordinates centered at $p$ by
\begin{equation*}
dV|_{\exp _{p}\left( r\xi \right) }=J\left( p,r,\xi \right) drd\xi
\end{equation*}%
for $r>0$ and $\xi \in S_{p}M,$ the unit tangent sphere at $p$. For $R>r_{0},$
we consider a normal minimizing geodesic $%
\gamma \left( t\right) $  from $p=\gamma \left(0\right) $ to
$x=\gamma \left( R\right).$  According to (\ref%
{vu5}), we have%
\begin{equation*}
m\left( r\right) \leq \frac{n-1}{r}+f^{\prime }\left( r\right) -\frac{1}{r}%
\ln \left( \frac{J\left( r\right) }{r^{n-1}}\right) -\frac{1}{r}\left(
f\left( r\right) -f\left( 0\right) \right) ,
\end{equation*}%
where $m\left( r\right) :=\frac{d}{dr}\ln J\left( p,r,\gamma ^{\prime
}\left( 0\right) \right) .$ Denote $u\left( r\right) :=\ln \frac{%
J_{f}\left( r\right) }{r^{n-1}}.$ The above inequality becomes%
\begin{equation*}
u^{\prime }\left( r\right) +\frac{1}{r}u\left( r\right) \leq \frac{f\left(
0\right) }{r}-2\frac{f\left( r\right) }{r}.
\end{equation*}%
Choosing $r>r_{\varepsilon }$ sufficiently large, by (\ref{e2}) we see that%
\begin{equation*}
u^{\prime }\left( r\right) +\frac{1}{r}u\left( r\right) \leq 2a-\varepsilon ,%
\text{ \ for all }r>r_{\varepsilon }.
\end{equation*}%
Integrating this inequality from $r=r_{\varepsilon }$ to $r=R>r_{\varepsilon
},$ we conclude that
\begin{equation*}
J_{f}\left( R\right) =R^{n-1}e^{u\left( R\right) }\leq C\left( \varepsilon
\right) e^{\left( a-\frac{1}{2}\varepsilon \right) R}.
\end{equation*}%
Since $R$ is arbitrary, this implies that the weighted volume entropy is $h_{f}\left(
M\right) \leq a-\frac{1}{2}\varepsilon$, which is a contradiction.
Therefore, (\ref{e1}) is true.

Let $E$ denote one of the ends of $M.$ Let $F:=M\backslash E$. Then $%
M=E\cup F$ and $\mathrm{Int}\left( E\right) \cap \mathrm{Int}\left( F\right)
=\phi .$ Moreover, $E$ and $F$ have compact boundary, and $\partial
E=\partial F$.

By choosing a subsequence if necessary, we may assume that $%
x_{k}\in E$ for all $k\geq 0.$ By construction of $E$ and $F$ it is easy
to see that there exists a ray from $x_{k}$ to the infinity of $F$ for each $%
k.$ Let $y_{k}$ be a point on this ray such that the mid point of the geodesic
segment $\overline{x_{k}y_{k}}$ lies on $\partial E.$ We denote by $\gamma
_{k}:=\overline{x_{k}y_{k}}$ and parametrize $\gamma _{k}$ such that
\begin{equation}
\gamma _{k}\left( -t_{k}\right) =x_{k}\in E\text{, \ \ }\gamma _{k}\left(
t_{k}\right) =y_{k}\in F\text{\ \ and \ }\gamma _{k}\left( 0\right)
=z_{k}\in \partial E.  \label{e3}
\end{equation}%
Since the sequence $x_{k}$ is unbounded, $t_{k}\rightarrow
\infty $ as $k\rightarrow \infty .$ We now consider the following functions%
\begin{eqnarray*}
\beta _{k}^{+}\left( x\right) &:=& t_{k}-d\left( y_{k},x\right)  \\
\beta _{k}^{-}\left( x\right) &:=& t_{k}-d\left( x_{k},x\right)  \\
\beta _{k}\left( x\right) &:=& \beta _{k}^{+}\left( x\right) +\beta
_{k}^{-}\left( x\right)
\end{eqnarray*}%
and show that $\beta _{k}^{\pm }$ satisfies a differential
inequality in the sense of distributions. Moreover, $\beta _{k}^{\pm }$
converges uniformly on compact sets. To this end, let $\Omega \subset M$ be an arbitrary compact
 domain and $\phi \in C^{\infty }\left( M\right) $ a smooth nonnegative function with support in $\Omega .$

Let $x\in \Omega $ be any point which is not in the cut locus of either $%
x_{k}$ or $y_{k}.$ Denote by $\tau _{k}\left( s\right) $ the unique
minimizing geodesic from $y_{k}$ to $x,$ parametrized by arclength. We let $r_{k}:=d\left( y_{k},x\right).$
Then we have $\tau
_{k}\left( 0\right) =y_{k}$ and $\tau _{k}\left( r_{k}\right) =x.$ According
to (\ref{vu6}), we have
\begin{equation}
\Delta _{f}d\left( y_{k},x\right) +\frac{1}{r_{k}}\ln J\left(
y_{k},r_{k},\tau _{k}^{\prime }\left( 0\right) \right) \leq \frac{n-1}{r_{k}}%
\left( 1+\ln r_{k}\right) -\frac{f\left( x\right) }{r_{k}}+\frac{f\left(
y_{k}\right) }{r_{k}}.  \label{e4}
\end{equation}%
Moreover, since the linear growth rate of $f$ is $a,$ for any $\varepsilon >0$ we
have
\begin{equation*}
\left\vert f\left( y_{k}\right) \right\vert \leq \left( a+\varepsilon
\right) r_{k}+b.
\end{equation*}%
Note that $b$ is a positive constant depending only on $\Omega $ and $%
\varepsilon ,$ but not $k.$ Hence, for $k\geq
k_{\varepsilon }$ sufficiently large, it follows that:
\begin{equation}
\Delta _{f}d\left( y_{k},x\right) +\frac{1}{r_{k}}\ln J\left( r_{k}\right)
\leq a+2\varepsilon .  \label{e5}
\end{equation}%
This inequality holds for any $x\in \Omega$ not in the cut locus
of $y_{k},$ and for all $k\geq k_{\varepsilon }$ with $k_{\varepsilon }$
depending on $\varepsilon $ and $\Omega .$
Recall that $r_{k}\left( x\right) :=d\left( y_{k},x\right) .$ Since $\Omega $ is
bounded, there exists a constant $c>0$ independent of $k$ and $%
\varepsilon $ so that
\begin{equation*}
\Omega \subset B_{y_{k}}\left( t_{k}+c\right) \backslash B_{y_{k}}\left(
t_{k}-c\right) .
\end{equation*}%
 Moreover, there exists a constant $c_{0}>0$, independent of $k$ and $\varepsilon$, so that $%
r_{k}>t_{k}-c_{0}$, for any $x\in \Omega .$

We now multiply (%
\ref{e5}) by
\begin{equation*}
\phi dv=\phi J\left( y_{k},r_{k},\xi \right) dr_{k}d\xi ,
\end{equation*}%
where $\phi $ is a nonnegative function with support in $\Omega $. Since the function $%
h\left( J\right) :=J\ln J$ is bounded below by $-\frac{1}{e},$ one has
\begin{gather*}
\int_{\mathbb{S}^{n-1}}J\left( y_{k},r_{k},\xi \right) \ln J\left(
y_{k},r_{k},\xi \right) \phi d\xi \geq -\frac{1}{e}\int_{\mathbb{S}%
^{n-1}}\phi d\xi \\
\geq -C\sup_{\Omega }\phi .
\end{gather*}%
In view of this, it then follows from (\ref{e5}) that
\begin{equation*}
\int_{M}\left( \Delta _{f}d\left( y_{k},x\right) -a\right) \phi \leq C\left(
\frac{1}{t_{k}-c_{0}}+2\varepsilon \right) \sup_{\Omega }\phi,
\end{equation*}%
where the constant $C$ is independent of $k$ and $%
\varepsilon.$ So for $k_{\varepsilon }>0$ sufficiently large,
\begin{equation}
\int_{M}\left( \Delta _{f}\beta _{k}^{+}+a\right) \phi \geq -C\varepsilon
\sup_{\Omega }\phi   \label{e6}
\end{equation}%
for any $k\geq k_{\varepsilon }$ and any nonnegative function $\phi $
with support on $\Omega .$

In the same fashion, we obtain a similar differential inequality for $\beta
_{k}^{-}.$ Indeed, using the same notations as before, we have:%
\begin{equation*}
\Delta _{f}d\left( x_{k},x\right) +\frac{1}{\overline{r}_{k}}\ln J\left(
\overline{r}_{k}\right) \leq \frac{n-1}{\overline{r}_{k}}\left( 1+\ln
\overline{r}_{k}\right) -\frac{f\left( x\right) }{\overline{r}_{k}}+\frac{%
f\left( x_{k}\right) }{\overline{r}_{k}},
\end{equation*}%
where $\overline{r}_{k}:=d\left( x_{k},x\right) .$ Recall
that by (\ref{e1}) $x_{k}$ satisfies
\begin{equation*}
\frac{f\left( x_{k}\right) }{d\left( p,x_{k}\right) }\leq -a+\varepsilon .
\end{equation*}%
Therefore, the same argument as for $\beta _{k}^{+}$ leads to
\begin{equation}
\int_{M}\left( \Delta _{f}\beta _{k}^{-}-a\right) \phi \geq -C\varepsilon
\sup_{\Omega }\phi   \label{e7}
\end{equation}%
for any $k\geq k_{\varepsilon }$ and any nonnegative function $\phi $
with support on $\Omega .$ The constant $C$ in (\ref{e7}) is again independent of $%
k$ and $\varepsilon .$

We now show that the sequences $\beta _{k}^{+}$ and $\beta _{k}^{-}$ admit
subsequences which converge uniformly on compact sets of $M$. Indeed, by the construction,
both functions are uniformly Lipschitz. Moreover,
\begin{equation*}
\beta _{k}^{\pm }\left( z_{k}\right) =\beta _{k}^{\pm }\left( \gamma
_{k}\left( 0\right) \right) =0\text{.}
\end{equation*}%
Note that the sequence $z_{k}$ is bounded as $z_{k}\in
\partial E.$ This shows that both $\beta _{k}^{+}$ and $\beta _{k}^{-}$ are
uniformly bounded and equicontinuous on any compact set. So we can apply Arzela-Ascoli theorem to
conclude a subsequence of $\beta _{k}^{\pm }$
converges locally uniformly on $M.$ We denote the limits by
$\beta ^{+}$ and $\beta ^{-},$ respectively.

Without loss of generality, we may assume $z_{k}=\gamma _{k}\left( 0\right) \in \partial E $ converges
to $z\in \partial E.$ Moreover, we may assume,
by further passing to a subsequence, that $\gamma _{k}\rightarrow \gamma$
uniformly on compact sets. The geodesic $\gamma $ obtained in this fashion
is in fact a line with $\gamma \left( 0\right) =z.$ Note that
\begin{equation*}
\beta _{k}^{+}\left( \gamma _{k}\left( t\right) \right) =t\text{ \ and \ }%
\beta _{k}^{-}\left( \gamma _{k}\left( t\right) \right) =-t
\end{equation*}%
for all $-t_{k}\leq t\leq t_{k}$. It follows that
\begin{equation}
\beta ^{+}\left( \gamma \left( t\right) \right) =t\text{ \ and \ }\beta
^{-}\left( \gamma \left( t\right) \right) =-t  \label{e8}
\end{equation}%
for all $t\in \mathbb{R}$. Hence, we know that $\beta ^{\pm }$ are linear
along $\gamma.$  Since $\left\vert \nabla\beta ^{\pm }\right\vert \leq 1$ on $M,$
we conclude that $\left\vert \nabla\beta ^{\pm }\right\vert =1$ along $\gamma.$

After letting $k\rightarrow \infty $ in (\ref{e6}), we have:%
\begin{equation*}
\int_{M}\left( \Delta _{f}\beta ^{+}+a\right) \phi \geq -C\varepsilon
\sup_{\Omega }\phi
\end{equation*}%
for any nonnegative smooth function $\phi $ with support in $\Omega.$
Since $C$ is independent of $%
\varepsilon,$ by setting $\varepsilon\rightarrow 0,$ one concludes that%
\begin{equation*}
\int_{M}\left( \Delta _{f}\beta ^{+}+a\right) \phi \geq 0
\end{equation*}%
for any nonnegative smooth function $\phi $ with compact support. In other words,
\begin{equation}
\Delta _{f}\beta ^{+}\geq -a  \label{e9}
\end{equation}%
in the sense of distributions. Similarly, one concludes that
\begin{equation}
\Delta _{f}\beta ^{-}\geq a  \label{e10}
\end{equation}%
in the sense of distributions. So for $\beta :=\beta ^{+}+\beta ^{-},$ it
follows that $\Delta _{f}\beta \geq 0$ on $M.$ Notice, however, that $\beta \left(
\gamma \left( t\right) \right) =0$ by (\ref{e8}). Also, $\beta \leq 0$
on $M$ as $\beta _{k}^{+}+\beta _{k}^{-}\leq 0$ for any $k$ by the
triangle inequality. Therefore, $\beta $ achieves its maximum along $\gamma.$
 The strong maximum principle then implies that $\beta =0$ on $M.$

From (\ref{e9}) and (\ref{e10}), it is then easy to see that
\begin{equation}
\Delta _{f}\beta ^{+}=-a\text{ on }M.  \label{e11}
\end{equation}%
In particular, $\beta ^{+}$ is smooth on $M.$ We now show that $%
\left\vert \nabla \beta ^{+}\right\vert =1$ on $M.$ Clearly, by the definition,
we have $\left\vert \nabla \beta ^{+}\right\vert \leq 1$ on $M.$
Furthermore, (\ref{e8}) shows that $\left\vert \nabla \beta ^{+}\right\vert
=1$ along $\gamma .$ Hence, $\left\vert \nabla \beta ^{+}\right\vert $
achieves its maximum on $\gamma .$ Now by (\ref{e11}) and the Bochner
formula, we get
\begin{align}
\frac{1}{2}\Delta _{f}\left\vert \nabla \beta ^{+}\right\vert ^{2}& \geq
\left\vert \mathrm{Hess}\left( \beta ^{+}\right) \right\vert ^{2}+\mathrm{Ric%
}_{f}\left( \nabla \beta ^{+},\nabla \beta ^{+}\right)  \label{e12} \\
+\left\langle \nabla \Delta _{f}\beta ^{+},\nabla \beta ^{+}\right\rangle &
\geq 0.  \notag
\end{align}%
Hence, $\left\vert \nabla \beta ^{+}\right\vert ^{2}$ is $f-$subharmonic.
By the maximum principle, it must be constant. This proves that $\left\vert
\nabla \beta ^{+}\right\vert =1$ on $M.$ From (\ref{e12}), it also follows
that $\mathrm{Hess}\left( \beta ^{+}\right) =0$ on $M.$ Therefore, the vector field
$\nabla \beta ^{+}$ is nontrivial and parallel. In particular, it implies $M$
splits into $M=\mathbb{R}\times N$ with $N$ being a level set of $\beta^{+}.$
Since $\beta ^{+}\left( t,x\right)=t$ for all $x\in N,$ (\ref{e11}) implies that $f^{\prime }\left(t\right) =a,$
or $f\left( t,x\right) =at+h\left( x\right) ,$ where $h\in C^{\infty }\left( N\right) .$ Since $N$ is totally
geodesic, we have
$\mathrm{\ Ric}_{N}+\mathrm{Hess}%
\left( h\right) \geq 0.$ In conclusion, $\left( N,g,e^{-h}dv\right) $ is a compact
smooth metric measure space with \textrm{Ric}$_{h}\geq 0.$ This proves the
theorem.
\end{proof}

The preceding proof shows that if $h_f(M)=a,$ then by picking a sequence
$x_k\in M$ going to infinity with
\begin{equation*}
\lim_{k\to \infty} \frac{f\left( x_{k}\right) }{d\left( p,x_{k}\right) }\leq -a,
\end{equation*}%
the associated Busemann function $\beta$ to the geodesic ray obtained as
the limit of minimizing geodesics from any fixed point $p$ to $x_k$ satisfies
\begin{equation}
\Delta _{f}\beta \geq a.  \label{e13}
\end{equation}%

\begin{theorem}
\label{lowerbd}
Let $\left( M,g,e^{-f}dv\right) $ be a smooth metric
measure space of dimension $n$ with $\mathrm{Ric}_{f}\geq 0$ and $|f|(x)\le a\,r(x)+b$ for some positive constants $a$ and $b.$
Assume that the weighted volume entropy of $%
\left( M,g,e^{-f}dv\right) $ satisfies $h_{f}\left( M\right) =a.$ Then
$\mathrm{Vol}(B_p(R))\ge c\,R$ for some constant $c>0$ and all $R\ge 1.$
\end{theorem}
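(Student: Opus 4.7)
The plan is to combine the Busemann inequality $\Delta_f\beta\ge a$ from (\ref{e13}) with the linear bound $|f|\le ar+b$: the former forces the weighted volume $V_f(R):=\int_{B_p(R)}e^{-f}\,dv$ to grow at least exponentially with rate $a$, and the latter then converts that exponential growth into a uniform positive lower bound on the Riemannian area $A(R)$ of geodesic spheres, which integrates to linear volume growth.

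First I would take the Busemann function $\beta$ furnished by (\ref{e13}), which is $1$-Lipschitz and satisfies $\Delta_f\beta\ge a$ in the sense of distributions. Testing this inequality against a radial cutoff $\eta_\delta$ equal to $1$ on $B_p(R)$, vanishing outside $B_p(R+\delta)$, and linear in $r$ on the annulus, and letting $\delta\to 0$, the coarea formula together with $|\nabla\beta\cdot\nabla r|\le 1$ yields
\begin{equation*}
A_f(R):=\int_{\partial B_p(R)}e^{-f}\,dA \ \ge\ a\,V_f(R)
\end{equation*}
for almost every $R>0$. Since $V_f'=A_f$ a.e.\ by coarea, Gr\"onwall gives the exponential lower bound
\begin{equation*}
V_f(R)\ \ge\ c_0\,e^{aR}\qquad\text{for every }R\ge 1,
\end{equation*}
with $c_0:=V_f(1)\,e^{-a}>0$.

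Next I would exploit the linear bound on $f$ in the opposite direction. On $\partial B_p(R)$ one has $-f\le aR+b$, hence $e^{-f}\le e^{aR+b}$ and so $A_f(R)\le e^{aR+b}A(R)$. Combining with $A_f(R)\ge a V_f(R)\ge ac_0 e^{aR}$ (valid a.e.), the two exponential factors cancel to leave
\begin{equation*}
A(R)\ \ge\ ac_0 e^{-b}=:c\ >\ 0
\end{equation*}
for almost every $R\ge 1$. Integrating the coarea identity $\mathrm{Vol}(B_p(R))=\int_0^R A(r)\,dr$ gives the desired lower bound $\mathrm{Vol}(B_p(R))\ge c(R-1)$ for all $R\ge 1$.

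The main technical obstacle I anticipate is the rigorous use of the distributional inequality $\Delta_f\beta\ge a$, since $\beta$ is only Lipschitz: the integration-by-parts that yields $A_f(R)\ge a V_f(R)$ must be handled via the cutoff $\eta_\delta$ and the coarea formula rather than pointwise. This is, however, the same mechanism already employed in the proof of Theorem \ref{MaxEntropy}, so no new analytic ingredient is required. The remaining steps are elementary consequences of the interplay between $e^{\pm f}$ and the hypothesized linear bound on $|f|$, together with $|\nabla\beta|\le 1$.
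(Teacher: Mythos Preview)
Your proposal is correct and follows essentially the same route as the paper: integrate the distributional inequality $\Delta_f\beta\ge a$ over $B_p(R)$ to obtain $A_f(R)\ge a\,V_f(R)$, deduce exponential growth of the weighted quantities, and then use the linear bound on $f$ to cancel the exponential factor and conclude $A(R)\ge c$. The only difference is that you spell out the cutoff argument and the Gr\"onwall step more explicitly, whereas the paper states the integration and the resulting bound $A_f(R)\ge c\,e^{aR}$ directly.
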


\begin{proof}[Proof of Theorem \protect\ref{lowerbd}]
From the preceding remark, there exists a geodesic ray emanating from point $p$ such that the associated Busemann function $\beta$ satisfies the differential inequality
\begin{equation*}
\Delta _{f}\beta \geq a.
\end{equation*}%
Integrating the inequality with respect to the weighted measure on geodesic ball
$B_p(R),$ one concludes that
\begin{equation*}
a\, \mathrm{Vol}_f(B_p(R))\le \int_{B_p(R)}\Delta _{f}\beta \, e^{-f}dv\leq \mathrm{Area}_f(\partial B_p(R)),
\end{equation*}%
where $\mathrm{Vol}_f(B_p(R))$ and $\mathrm{Area}_f(\partial B_p(R))$ are the weighted volume and area of the respective sets. Thus, we have
\begin{equation*}
\mathrm{Area}_f(\partial B_p(R))\geq c\,e^{a\,R}
\end{equation*}%
for $R\ge 1.$

On the other hand, as $|f|(x)\le a\,r(x)+b,$ one sees easily
that
\begin{equation*}
\mathrm{Area}_f(\partial B_p(R))\leq e^b\,e^{a\,R}\,\mathrm{Area}(\partial B_p(R)).
\end{equation*}%
Combining these two inequalities, one concludes that
\begin{equation*}
\mathrm{Area}(\partial B_p(R))\geq c
\end{equation*}%
 for some positive constant $c.$ The theorem then follows.
\end{proof}

\section{\label{qua}Splitting for quadratic weight}

In this section we consider smooth metric measure spaces with
the Bakry-\'{E}mery curvature bounded below by a positive constant,
which is normalized as $\mathrm{Ric}%
_{f}\geq \frac{1}{2}.$ Our goal is to study the structure at infinity of
such manifolds. A new difficulty here is that $f$ grows at
least quadratically on $M.$

\begin{theorem}
\label{Splitting}Let $\left( M,g,e^{-f}dv\right) $ be a smooth metric
measure space of dimension $n$ with $\mathrm{Ric}_{f}\geq \frac{1}{2}.$
Assume that $f$ admits an estimate%
\begin{equation}
f\left( x\right) \leq \frac{1}{4}d\left( x,K\right) ^{2}+C,  \label{s*}
\end{equation}%
for all $x\in M,$ where $C>0$ is some constant and $K\subset M$ a compact subset.
If there is a line of $M$ passing through $K,$ then $M=\mathbb{R}\times N.$  In particular, if $M\backslash K$ has more than
one unbounded component, then there exists a compact manifold $N$ such that $%
M=\mathbb{R}\times N.$
\end{theorem}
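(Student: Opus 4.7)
The plan is to adapt the Cheeger--Gromoll Busemann-function splitting argument to the quadratic-weight setting, using the Laplace comparison (\ref{vu6}) with $\lambda = \tfrac{1}{2}$. Given a line $\gamma : \mathbb{R} \to M$ with $\gamma(0) = z_0 \in K$, I would set $p_k^{\pm} := \gamma(\pm t_k)$ for a sequence $t_k \to \infty$ and form the approximating Busemann functions
\[
\beta_k^{\pm}(x) := t_k - d(p_k^{\pm}, x),
\]
which are uniformly Lipschitz, vanish at $z_0$, and hence admit (after passing to a subsequence) locally uniform limits $\beta^{\pm}$ satisfying $\beta^{\pm}(\gamma(t)) = \pm t$. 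My goal is to prove that $\beta^+$ is smooth with $\nabla \beta^+$ globally parallel and of unit norm, which forces $M = \mathbb{R} \times N$.

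The crucial step is to apply (\ref{vu6}) with $\lambda = \tfrac{1}{2}$, $p = p_k^+$, and $r = r_k^+ := d(p_k^+, x)$, and to exploit the precise calibration of the hypothesis on $f$. Since $z_0 \in K$ lies on $\gamma$, one has $d(p_k^+, K) \leq t_k$, hence $f(p_k^+) \leq t_k^2/4 + C$. Using $r_k^+ = t_k - \beta_k^+(x)$, the two quadratic-in-$t_k$ contributions combine as
\[
- \frac{r_k^+}{4} + \frac{f(p_k^+)}{r_k^+} \leq \frac{t_k^2 - (r_k^+)^2}{4 r_k^+} + \frac{C}{r_k^+} = \frac{2 t_k \beta_k^+(x) - \beta_k^+(x)^2}{4(t_k - \beta_k^+(x))} + \frac{C}{r_k^+},
\]
and the right-hand side tends to $\beta^+(x)/2$ as $k \to \infty$ for $x$ in a fixed compact set. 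Integrating the rearranged comparison against a nonnegative test function $\phi$ supported in such a compact set and controlling the Jacobian contribution $\tfrac{1}{r_k^+}\ln(J/(r_k^+)^{n-1})$ by means of $J \ln J \geq -1/e$ exactly as in the proof of Theorem~\ref{MaxEntropy}, passage to the limit yields the distributional inequalities
\[
\Delta_f \beta^+ \geq -\frac{\beta^+}{2} \quad \text{and, symmetrically,} \quad \Delta_f \beta^- \geq -\frac{\beta^-}{2} \quad \text{on } M.
\]

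With these in hand, set $\beta := \beta^+ + \beta^-$. The triangle inequality gives $\beta \leq 0$, and the sum of the two estimates gives $\Delta_f \beta \geq -\beta/2 \geq 0$; since $\beta(z_0) = 0$ is an interior maximum, the strong maximum principle forces $\beta \equiv 0$, hence $\beta^- = -\beta^+$. Inserting this back produces $\Delta_f \beta^+ = -\beta^+/2$ exactly, so $\beta^+$ is smooth by elliptic regularity, and the Bochner formula together with $\mathrm{Ric}_f \geq \tfrac{1}{2}$ gives
\[
\frac{1}{2}\Delta_f |\nabla \beta^+|^2 \geq |\mathrm{Hess}(\beta^+)|^2 + \frac{1}{2}|\nabla \beta^+|^2 - \frac{1}{2}|\nabla \beta^+|^2 = |\mathrm{Hess}(\beta^+)|^2 \geq 0,
\]
since $\langle \nabla \Delta_f \beta^+, \nabla \beta^+ \rangle = -\tfrac{1}{2}|\nabla \beta^+|^2$. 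Because $|\nabla \beta^+| \leq 1$ with equality along $\gamma$, the maximum principle forces $|\nabla \beta^+| \equiv 1$ and $\mathrm{Hess}(\beta^+) \equiv 0$, so that $\nabla \beta^+$ is a globally parallel unit vector field and $M = \mathbb{R} \times N$. The ``in particular'' statement then follows by the standard line-production argument: if $M \setminus K$ has two unbounded components $E_1, E_2$, choose $x_k \in E_1$, $y_k \in E_2$ with $d(p, x_k), d(p, y_k) \to \infty$ and note that any minimizing geodesic from $x_k$ to $y_k$ must cross $K$, so that an Arzel\`a--Ascoli limit reparametrized at the crossing is a line through a point of $K$. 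The main obstacle I expect is the precise cancellation in $-r_k^+/4 + f(p_k^+)/r_k^+$: the coefficient $\tfrac{1}{4}$ in the hypothesis $f \leq \tfrac{1}{4}d(\cdot, K)^2 + C$ is sharply calibrated so that the leading quadratic terms cancel and leave exactly the residue $\beta^+/2$, which in turn matches the $\tfrac{1}{2}|\nabla \beta^+|^2$ contribution from $\mathrm{Ric}_f \geq \tfrac{1}{2}$ in Bochner; verifying that this calibration survives the distributional/integrated formulation, with the Jacobian bookkeeping borrowed from Theorem~\ref{MaxEntropy}, is the technical heart of the argument.
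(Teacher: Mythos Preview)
Your proposal is correct and follows the paper's proof almost verbatim: the same Busemann setup, the same use of (\ref{vu6}) with the quadratic cancellation $-r/4 + f(\gamma(t))/r \to \beta^+/2$, the same $J\ln J \ge -1/e$ trick to pass to the distributional inequality $\Delta_f\beta^\pm \ge -\tfrac12\beta^\pm$, and the same Bochner conclusion.

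The one point where you diverge is in concluding $\beta\equiv 0$. You observe that $\beta\le 0$ forces $\Delta_f\beta \ge -\tfrac12\beta \ge 0$, so $\beta$ is $f$-subharmonic with interior maximum $0$ at $\gamma(0)$, and the strong maximum principle finishes it. The paper instead invokes the weighted area decay $\mathrm{Area}_f(\partial B_p(t))\le ce^{-t^2/4+ct}$ to place $\beta\in L^2(e^{-f}dv)$ and then integrates $\Delta_f\beta \ge -\tfrac12\beta$ over $(M,e^{-f}dv)$. Your route is more elementary (it needs no auxiliary weighted-volume estimate) and is in fact exactly the device the paper itself uses in the linear-weight Theorem~\ref{MaxEntropy}; the paper's $L^2$ argument has the mild advantage of not relying on the existence of an interior maximum point, but here that point is supplied by $\gamma(0)$.
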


\begin{proof}[Proof of Theorem \protect\ref{Splitting}]
It suffices to deal with the first case that there is a line $\gamma$
passing through $K.$
Indeed, if $M\backslash K$ is not connected at infinity, denoting $E$ to be one
of the unbounded connected components of $M\backslash K$ and $F:=\left( M\backslash
K\right) \backslash E,$ then it follows that
\begin{equation*}
E\cup F=M\backslash K\text{ \ \ and }E\cap F=\phi .
\end{equation*}%
Also, $F$ is an unbounded component of $M$. Note that any point in $E$ can be
connected via a minimizing geodesic to another point in $F.$ Such a geodesic
must intersect $K$ at least once. It is then standard to construct a
line $\gamma \left( t\right) ,$ $t\in \mathbb{R}$, such that $\gamma \left(
0\right) \in K,$ $\gamma \left( t\right) \in E$ for $t<0$ sufficiently
negative and $\gamma \left( t\right) \in F$ for $t>0$ sufficiently large.

By (\ref{s*}), we have
\begin{equation}
f\left( \gamma \left( t\right) \right) \leq \frac{1}{4}t^{2}+C,\text{ \ \
for all }t\in \mathbb{R}.  \label{s0}
\end{equation}
For $t>0$ and a point $x\in M$ not in the cut locus of $\gamma \left( t\right),$ denote by $\tau _{t}\left( s\right) $ the unique minimizing normal geodesic from
$\gamma \left( t\right) $ to $x.$ Let $r:=d\left( \gamma \left( t\right) ,x\right).$ Then $\tau_{t}\left( 0\right) =\gamma \left( t\right) $ and $\tau _{t}\left( r\right)=x.$ According to (\ref{vu6}), we have%
\begin{gather}
\Delta _{f}d\left( \gamma \left( t\right) ,x\right) +\frac{1}{r}\ln J\left(
\gamma \left( t\right) ,r,\tau _{t}^{\prime }\left( 0\right) \right) \leq
\frac{n-1}{r}\left( 1+\ln r\right)  \label{s3} \\
-\frac{1}{4}r+\frac{f\left( \gamma \left( t\right) \right) }{r}-\frac{%
f\left( x\right) }{r}.  \notag
\end{gather}
We now claim that for any $\varepsilon >0$ and $t>0$ sufficiently large,
\begin{equation}
\frac{n-1}{r}\left( 1+\ln r\right) -\frac{1}{4}r+\frac{f\left( \gamma \left(
t\right) \right) }{r}-\frac{f\left( x\right) }{r}\leq \frac{1}{2}\left(
t-r\right) +\varepsilon .  \label{s4}
\end{equation}%
To verify this, first note that by the triangle inequality,
\begin{equation}
\left\vert t-r\right\vert \leq d\left( \gamma \left( 0\right) ,x\right) .
\label{s5}
\end{equation}%
Since $x$ is fixed, for $t>0$ sufficiently large we have
\begin{equation*}
\frac{n-1}{r}\left( 1+\ln r\right) -\frac{f\left( x\right) }{r}\leq \frac{%
\varepsilon }{2}.
\end{equation*}%
Using (\ref{s0}), we have,%
\begin{eqnarray*}
&&-\frac{1}{4}r+\frac{f\left( \gamma \left( t\right) \right) }{r}-\frac{1}{2}%
\left( t-r\right) =\frac{1}{4r}\left( -r^{2}+4f\left( \gamma \left( t\right)
\right) -2r\left( t-r\right) \right) \\
&\leq &\frac{1}{4r}\left( -r^{2}+t^{2}+C-2r\left( t-r\right) \right) =\frac{1%
}{4r}\left( \left( t-r\right) ^{2}+C\right) \\
&\leq &\frac{1}{4r}\left( d\left( p,x\right) ^{2}+C\right),
\end{eqnarray*}%
where we have used (\ref{s5}) in the last step. Hence, for $t>0$
sufficiently large,
\begin{equation*}
-\frac{1}{4}r+\frac{f\left( \gamma \left( t\right) \right) }{r}-\frac{1}{2}%
\left( t-r\right) \leq \frac{\varepsilon }{2}.
\end{equation*}%
In conclusion, (\ref{s4}) holds true. It then follows from (\ref{s3})
that, for $t>0$ sufficiently large,
\begin{equation}
\Delta _{f}d\left( \gamma \left( t\right) ,x\right) +\frac{1}{r}\ln J\left(
\gamma \left( t\right) ,r,\xi \right) \leq \frac{1}{2}\left( t-d\left(
\gamma \left( t\right) ,x\right) \right) +\varepsilon ,  \label{s7}
\end{equation}%
where $\xi :=\tau _{t}^{\prime }\left( 0\right) ,$ and $r:=d\left( \gamma
\left( t\right) ,x\right) .$

Let us emphasize that (\ref{s7}) holds for any $x$ not in
the cut-locus of $\gamma \left( t\right).$ For a compact domain
 $\Omega \subset M$ and a nonnegative smooth function $\phi$ with
 support in $\Omega,$ using the
geodesic coordinates centered at $\gamma \left( t\right),$ we multiply (\ref%
{s7}) by
\begin{equation*}
\phi dv=\phi J\left( \gamma \left( t\right) ,r,\xi \right) drd\xi
\end{equation*}%
and integrate over $\Omega.$

Since $\Omega $ is bounded, there exists a constant $c>0$ independent of $t$
so that
\begin{equation*}
\Omega \subset B_{\gamma \left( t\right) }\left( t+c\right) \backslash
B_{\gamma \left( t\right) }\left( t-c\right) .
\end{equation*}%
Consequently, there exists a constant $c_{0}>0$ so that $r>t-c_{0}$ whenever
$x\in \Omega .$ Furthermore, since the function $h\left( J\right) :=J\ln J$
is bounded below by $-\frac{1}{e},$ we see that
\begin{gather*}
\int_{\mathbb{S}^{n-1}}J\left( \gamma \left( t\right) ,r,\xi \right) \ln
J\left( \gamma \left( t\right) ,r,\xi \right) \phi d\xi \geq -\frac{1}{e}%
\int_{\mathbb{S}^{n-1}}\phi d\xi \\
\geq -C\sup_{\Omega }\phi .
\end{gather*}%
Using this and (\ref{s7}), we conclude that
\begin{gather}
\int_{M}\left( \Delta _{f}d\left( \gamma \left( t\right) ,x\right) \right)
\phi \leq \frac{C}{t-c_{0}}\sup_{\Omega }\phi  \label{s8} \\
+\int_{M}\left( \frac{1}{2}\left( t-d\left( \gamma \left( t\right) ,x\right)
\right) \right) \phi +\varepsilon \sup_{\Omega }\phi .  \notag
\end{gather}%
We now denote the Busemann function associated with the ray $\gamma \left(
t\right) ,$ $t>0,$ by
\begin{equation*}
\beta ^{+}\left( x\right) :=\lim_{t\rightarrow \infty }\left( t-d\left(
\gamma \left( t\right) ,x\right) \right) .
\end{equation*}%
Letting $t\rightarrow \infty $ in (\ref{s8}) and then $\varepsilon
\rightarrow 0,$ we have
\begin{equation*}
\int_{M}\left( \Delta _{f}\beta ^{+}+\frac{1}{2}\beta ^{+}\right) \phi \geq
0.
\end{equation*}%
This shows that
\begin{equation}
\Delta _{f}\beta ^{+}\geq -\frac{1}{2}\beta ^{+}\text{ \ \ on }M  \label{s9}
\end{equation}%
in the sense of distributions. Similarly, for
\begin{equation*}
\beta ^{-}\left( x\right) :=\lim_{t\rightarrow \infty }\left( t-d\left(
\gamma \left( -t\right) ,x\right) \right) ,
\end{equation*}%
the same argument as above implies
\begin{equation}
\Delta _{f}\beta ^{-}\geq -\frac{1}{2}\beta ^{-}\text{ \ \ on }M
\label{s10}
\end{equation}%
in the sense of distributions.

Adding (\ref{s9}) to (\ref{s10}) and denoting $\beta :=\beta ^{+}+\beta
^{-}, $ we obtain
\begin{equation}
\Delta _{f}\beta \geq -\frac{1}{2}\beta .  \label{s11}
\end{equation}%
Notice now that $\beta \in L^{2}\left( e^{-f}dv\right) .$ Indeed, the
assumption that $\mathrm{Ric}_{f}\geq \frac{1}{2}$ implies (see \cite{M, WW})
that %

\begin{equation*}
\mathrm{Area}_{f}\left( \partial B_{p}\left( t\right) \right) \leq ce^{-%
\frac{1}{4}t^{2}+ct},
\end{equation*}%
which suffices to justify our claim.

Integrating (\ref{s11}) on $(M,e^{-f}dv)$ and using that $\beta \leq 0$ on $M,$ one concludes $\beta =0$ on $M.$ Hence, $%
\beta ^{+}=-\beta ^{-},$ and both (\ref{s9}) and (\ref{s10}) have to be
equalities. In particular, $\beta ^{+}$ is an eigenfunction of $\Delta _{f}$
with eigenvalue $\frac{1}{2}$. By the Bochner formula, we have
\begin{eqnarray*}
0 &=&\frac{1}{2}\Delta _{f}\left\vert \nabla \beta ^{+}\right\vert ^{2} \\
&=&\left\vert \mathrm{Hess}\left( \beta ^{+}\right) \right\vert ^{2}+\mathrm{%
Ric}_{f}\left( \nabla \beta ^{+},\nabla \beta ^{+}\right) +\left\langle
\nabla \Delta _{f}\beta ^{+},\nabla \beta ^{+}\right\rangle \\
&\geq &\left\vert \mathrm{Hess}\left( \beta ^{+}\right) \right\vert ^{2}.
\end{eqnarray*}%
This proves that $\mathrm{Hess}\left( \beta ^{+}\right) =0.$ Since $%
\left\vert \nabla \beta ^{+}\right\vert =1,$ this implies the splitting $M=%
\mathbb{R}\times N,$ $N$ being a level set of $%
\beta ^{+},$ i.e., $N=\left( \beta ^{+}\right) ^{-1}\left\{ 0\right\} .$
Clearly, $\beta ^{+}\left( t,x\right) =t$ for all $x\in N$ and $t\in
\mathbb{R}.$ Moreover, from $\Delta _{f}\beta ^{+}=-\frac{1}{2}\beta ^{+},$
we find that $\left\langle \nabla f,\nabla t\right\rangle =\frac{1}{2}t$
or $f\left( t,x\right) =\frac{1}{4}t^{2}+h\left( x\right) ,$ where $h\in
C^{\infty }\left( N\right) .$ Since $\mathrm{Ric}_{f}\geq \frac{1}{2}%
,$  it implies $\mathrm{Ric}_{N}+\mathrm{Hess}\left(
h\right) \geq \frac{1}{2}$ on $N$ as well. The theorem is
proved.
\end{proof}

We now point out the following estimate on weight function $f.$
The argument is essentially due to Cao and Zhou \cite{CZ}, where they
have obtained the same result for shrinking gradient Ricci solitons.

\begin{proposition}
\label{f} Let $\left( M,g,e^{-f}dv\right) $ be a complete smooth metric
measure space of dimension $n$. Assume that $\mathrm{Ric}_{f}\geq \frac{1}{2}
$ and $\left\vert \nabla f\right\vert ^{2}\leq f.$ Then there exists a
constant $a>0$ so that%
\begin{equation*}
\frac{1}{4}\left( d\left( p,x\right) -a\right) ^{2}\leq f\left( x\right)
\leq \frac{1}{4}\left( d\left( p,x\right) +a\right) ^{2},
\end{equation*}%
for any $x\in M$ with $d(p,x)\geq r_{0}$. The constants $a$ and $r_{0}$
depend only on $n$ and $f\left( p\right) .$
\end{proposition}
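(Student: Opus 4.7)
The plan is to establish the two inequalities separately. The upper bound is an elementary ODE estimate coming directly from $|\nabla f|^{2}\le f$, while the lower bound requires the second variation of arc-length together with the already established upper bound, in the spirit of Cao--Zhou \cite{CZ}.

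For the upper bound, $|\nabla f|^{2}\le f$ forces $f\ge 0$ on $M$ and yields $|\nabla\sqrt{f}|\le \tfrac12$ wherever $f>0$. Integrating along a minimizing unit-speed geodesic $\gamma:[0,r]\to M$ from $p$ to $x$ (with a small regularization $\sqrt{f+\varepsilon}$ to bypass the zero set of $f$), one obtains
\begin{equation*}
\sqrt{f(x)}\le \sqrt{f(p)}+\tfrac{r}{2},
\end{equation*}
which immediately gives the upper half of the statement with $a=2\sqrt{f(p)}$.

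For the lower bound, fix a minimizing unit-speed geodesic $\gamma:[0,r]\to M$ with $\gamma(0)=p$ and $\gamma(r)=x$, and let $E_{1},\dots,E_{n-1}$ be parallel orthonormal vector fields along $\gamma$ orthogonal to $\gamma'$. Test the second variation with $V_{i}=\phi E_{i}$ for the continuous piecewise-linear cutoff $\phi(t)=t$ on $[0,1]$, $\phi(t)=1$ on $[1,r-1]$, $\phi(t)=r-t$ on $[r-1,r]$. Summing over $i$ and using $\mathrm{Ric}(\gamma',\gamma')\ge \tfrac12-(f\circ\gamma)''$ gives
\begin{equation*}
\tfrac12\int_{0}^{r}\phi^{2}\,dt-\int_{0}^{r}\phi^{2}(f\circ\gamma)''\,dt\le (n-1)\int_{0}^{r}\phi'^{2}\,dt.
\end{equation*}
Integration by parts (with $\phi$ vanishing at the endpoints) rewrites the Hessian term as $-2\int_{0}^{r}\phi\phi'(f\circ\gamma)'\,dt$; the pointwise bound $|(f\circ\gamma)'|\le|\nabla f|\le\sqrt{f\circ\gamma}$ then controls its absolute value by $2\int_{0}^{r}|\phi\phi'|\sqrt{f\circ\gamma}\,dt$. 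Crucially, $\phi\phi'$ is supported in $[0,1]\cup[r-1,r]$, where the upper bound established above yields $\sqrt{f(\gamma(t))}\le\sqrt{f(p)}+\tfrac12$ on $[0,1]$ and $\sqrt{f(\gamma(t))}\le\sqrt{f(x)}+\tfrac12$ on $[r-1,r]$. Computing $\int_{0}^{r}\phi^{2}\,dt=r-\tfrac{4}{3}$ and $\int_{0}^{r}\phi'^{2}\,dt=2$, the inequality collapses to
\begin{equation*}
\sqrt{f(p)}+\sqrt{f(x)}+1\ \ge\ \tfrac12\bigl(r-\tfrac{4}{3}\bigr)-2(n-1),
\end{equation*}
i.e.\ $\sqrt{f(x)}\ge \tfrac{r}{2}-c(n)-\sqrt{f(p)}$. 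Squaring gives $f(x)\ge\tfrac14(r-a)^{2}$ once $r\ge r_{0}$, with $a$ and $r_{0}$ depending only on $n$ and $f(p)$.

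The main obstacle is the balance in the lower bound argument: one needs to localize the contributions of $(f\circ\gamma)'$ to short neighborhoods of the endpoints, so that only the already-known upper bound of $f$ near $p$ and near $x$ (and \emph{not} the bulk behavior of $f$ along $\gamma$, which is what we are trying to estimate) enters the final inequality. The piecewise-linear choice of $\phi$ above is tailored precisely so that $\phi\phi'\equiv 0$ on the bulk interval $[1,r-1]$, which makes this localization possible.
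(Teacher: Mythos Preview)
Your proposal is correct and follows essentially the same route as the paper: the upper bound via $|\nabla\sqrt{f}|\le\tfrac12$ and integration along a minimizing geodesic, and the lower bound via the second variation inequality with the same piecewise-linear cutoff $\phi$, integration by parts, and the endpoint-localized bound $|f'|\le\sqrt{f}$ (this is exactly the Cao--Zhou argument the paper reproduces). One harmless slip: after integrating by parts, the Hessian term $-\int_0^r\phi^2(f\circ\gamma)''$ becomes $+2\int_0^r\phi\phi'(f\circ\gamma)'$, not $-2\int$; since you immediately pass to absolute values this does not affect the conclusion.
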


\begin{proof}[Proof of Proposition \protect\ref{f}]
We follow the argument in \cite{CZ} for gradient shrinkers closely.
Rewrite $\left\vert \nabla f\right\vert ^{2}\leq f$ as
\begin{equation}
\left\vert \nabla \left( 2\sqrt{f}\right) \right\vert \leq 1.  \label{f0}
\end{equation}%
Integrating along a minimizing geodesics gives
\begin{equation*}
\sqrt{f}\left( x\right) \leq \frac{1}{2}d\left( p,x\right) +\sqrt{f\left(
p\right) }.
\end{equation*}%
This proves the upper bound estimate. We now prove the lower bound.
Note by the second variation formula of arc-length,
along a minimizing geodesic $\gamma $ from $p$ to $x,$ we have
\begin{equation*}
\int_{\gamma }\mathrm{Ric}\left( \gamma ^{\prime },\gamma ^{\prime }\right)
\phi ^{2}\leq \left( n-1\right) \int_{\gamma }\left( \phi ^{\prime }\right)
^{2}
\end{equation*}%
for any function $\phi $ with compact support on $\gamma .$ By choosing
\begin{equation*}
\phi =\left\{
\begin{array}{c}
1 \\
t \\
r-t \\
0%
\end{array}%
\right.
\begin{array}{l}
\text{for }1\leq t\leq r-1 \\
\text{for \ }0\leq t\leq 1 \\
\text{for }r-1\leq t\leq r\text{\ } \\
\text{otherwise}%
\end{array}%
\end{equation*}%
and using that $\mathrm{Ric}\left( \gamma ^{\prime },\gamma ^{\prime }\right)
+f^{\prime \prime }\left( t\right) \geq \frac{1}{2},$ we get%
\begin{eqnarray}
\frac{1}{2}r-c_{1} &\leq &\int_{0}^{r}f^{\prime \prime }\left( t\right) \phi
^{2}\left( t\right) dt  \label{f1} \\
&=&-2\int_{0}^{r}f^{\prime }\left( t\right) \phi \left( t\right) \phi
^{\prime }\left( t\right) dt.  \notag
\end{eqnarray}%
Here $c_{1}$ depends
only on $n$. Observe that for $x,y\in M$ with $d\left( x,y\right)
\leq 1,$ by (\ref{f0}),
\begin{equation*}
\left\vert 2\sqrt{f\left( x\right) }-2\sqrt{f\left( y\right) }\right\vert
\leq d\left( x,y\right) \leq 1.
\end{equation*}%
So
\begin{equation*}
\left\vert \int_{0}^{1}f^{\prime }\left( t\right) \phi \left( t\right)
dt\right\vert \leq \int_{0}^{1}\sqrt{f}\left( t\right) \phi \left( t\right)
dt\leq c_{2}
\end{equation*}%
for a constant $c_{2}$ depending on $f\left( p\right) .$ Using this,
we conclude from (\ref{f1}) that%
\begin{eqnarray*}
\frac{1}{2}r-c &\leq &2\int_{r-1}^{r}f^{\prime }\left( t\right) \phi \left(
t\right) dt\leq 2\int_{r-1}^{r}\sqrt{f\left( t\right) }\phi \left( t\right)
dt \\
&\leq &2\sqrt{f\left( r\right) }\int_{r-1}^{r}\phi \left( t\right) dt+c \\
&=&\sqrt{f\left( r\right) }+c,
\end{eqnarray*}%
where the constant $c$ depends on $n$ and $f\left( p\right).$ This proves the
lower bound estimate.
\end{proof}

\section{\label{VL}Volume lower bound}

In this section, our focus is on proving the second part of Theorem \ref{Vol} on
the volume lower bound. We will use a similar
strategy as in the shrinking Ricci soliton case \cite{MW1}.
We start by making some preparations.

\subsection{Preliminary Estimates}

The following proposition follows from (\ref{vu11}). But we give a short proof
here.

\begin{proposition}
\label{Relative} Let $\left( M,g,e^{-f}dv\right) $ be a complete smooth
metric measure space of dimension $n$. Assume that $\mathrm{Ric}_{f}\geq
\frac{1}{2}$ and $\left\vert \nabla f\right\vert ^{2}\leq f.$ Then there exists $%
c>0$, depending only on $n$ and $f\left( p\right) ,$ such that for any $r>1$
and $0\leq s\leq 1,$%
\begin{equation*}
\mathrm{Area}\left( \partial B_{p}\left( r+s\right) \right) \leq c\mathrm{%
Area}\left( \partial B_{p}\left( r\right) \right) .
\end{equation*}
\end{proposition}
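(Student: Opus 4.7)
The plan is to derive the relative area comparison from inequality (\ref{vu11}), $(r\,H(r,\xi))' \leq f(p)$, where $H(r,\xi) = \ln(J(p,r,\xi)/r^{n-1})$ and $J$ is the Jacobian along the minimizing geodesic $\exp_p(r\xi)$. Pointwise, this is equivalent to the mean-curvature bound $m(r,\xi) = (\ln J)'(r,\xi) \leq (n-1 + f(p) - H(r,\xi))/r$. Integrating this against $J(r,\xi)\,d\xi$ over $S_p M$ (absorbing the non-positive cut-locus contributions into $A'(r)$) and applying Jensen's inequality to the convex function $x\mapsto x\ln x$ to get $\int J\ln(J/r^{n-1})\,d\xi \geq A(r)\,B(r)$, where $A(r):=\mathrm{Area}(\partial B_p(r))$ and $B(r) := \ln(A(r)/(\omega_{n-1}r^{n-1}))$, produces the averaged differential inequality
\[
(r\,B(r))' \leq f(p).
\]

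Integrating from $0$ gives $B(r) \leq f(p)$; integrating from $r$ to $r+s$ gives $(r+s)B(r+s) - rB(r) \leq f(p)\,s$, which rearranges to $B(r+s) - B(r) \leq \frac{s(f(p)-B(r))}{r+s}$. Consequently, for $r > 1$ and $s \in [0,1]$,
\[
\frac{A(r+s)}{A(r)} = \Bigl(\frac{r+s}{r}\Bigr)^{n-1} e^{B(r+s)-B(r)} \leq 2^{n-1}\exp\!\Bigl(\tfrac{1}{2}(f(p)-B(r))\Bigr).
\]
If one can establish a uniform lower bound $B(r) \geq -M(n,f(p))$ for $r > 1$, the claim follows with $c = 2^{n-1}\exp((f(p)+M)/2)$.

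The main obstacle is precisely this lower bound on $B$. The upper bound $B \leq f(p)$ is immediate, but no matching lower bound follows from $(rB)' \leq f(p)$ alone: pointwise, $H(r,\xi)$ can be arbitrarily negative along individual directions (e.g.\ near conjugate points), and our hypotheses do not give a direct sectional curvature upper bound. To overcome this I would combine Proposition~\ref{f}, which sandwiches $f$ between $\tfrac{1}{4}(d(p,\cdot) \pm a)^2$ with $a$ depending only on $n$ and $f(p)$, the Bakry--\'Emery exponential decay $\mathrm{Area}_f(\partial B_p(r)) \leq C e^{-r^2/4 + Cr}$ under $\mathrm{Ric}_f \geq \tfrac{1}{2}$ (cf.\ the estimate invoked after (\ref{s11})), and the Euclidean volume bound $V(r) \leq c_1(n,f(p))\,r^n$ from Theorem~\ref{Vol_Up}. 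The Gaussian factors in $e^{-f}$ and in $\mathrm{Area}_f$ essentially cancel on each radius up to polynomial corrections depending only on $n$ and $f(p)$; together with the polynomial volume upper bound, this forces $A(r)/r^{n-1}$ to remain bounded below uniformly for $r > 1$, yielding the required lower bound on $B$ and closing the argument.
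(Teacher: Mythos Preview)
Your averaging via Jensen is fine and the differential inequality $(rB(r))'\le f(p)$ is correctly derived, but the argument breaks at the last step: you never establish the uniform lower bound on $B(r)$, and the tools you invoke cannot supply it. The weighted area decay $\mathrm{Area}_f(\partial B_p(r))\le Ce^{-r^2/4+Cr}$ together with $f\ge\frac14(r-a)^2$ on $\partial B_p(r)$ only yields an \emph{upper} bound on $A(r)$ (namely $A(r)\le Ce^{C'r}$); the Euclidean volume bound $V(r)\le Cr^n$ is likewise one-sided. Nothing in your list produces $A(r)\ge c\,r^{n-1}$, or even $A(r)\ge c_0>0$. In fact the paper's area lower bound (Theorem~\ref{Vol_Low}) is proved \emph{using} Proposition~\ref{Relative}, so appealing to it here would be circular. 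By passing to the averaged quantity $B(r)$ you have discarded exactly the pointwise information needed to close the loop.

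The paper avoids this entirely by working pointwise with the sharper inequality (\ref{vu3}) rather than (\ref{vu11}). After integrating the last term of (\ref{vu3}) by parts one gets
\[
m(t)\le \frac{n-1}{t}-\frac{t}{6}+f'(t)-\frac{2}{t}f(t)+\frac{2}{t^2}\int_0^t f(s)\,ds,
\]
and now Proposition~\ref{f} makes each term explicit: $f'(t)\le\frac{t}{2}+c$, $-\frac{2}{t}f(t)\le -\frac{t}{2}+c$, and $\frac{2}{t^2}\int_0^t f\le \frac{t}{6}+c$. The linear-in-$t$ contributions cancel exactly, leaving $m(t)\le c$ with $c=c(n,f(p))$. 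Integrating from $r$ to $r+s$ gives $J(p,r+s,\xi)\le e^c J(p,r,\xi)$ pointwise in $\xi$, and integrating over $S_pM$ finishes the proof. No lower bound on $J$ or $A$ is ever needed.
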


\begin{proof}[Proof of Proposition \protect\ref{Relative}]
We follow the notations in Section \ref{VU}. Recall from (\ref{vu3}) that
along a minimizing geodesic $\gamma $ with $\gamma \left( 0\right) =p$ and $%
\gamma ^{\prime }\left( 0\right) =\xi,$
\begin{equation*}
m\left( t\right) \leq \frac{n-1}{t}-\frac{1}{6}t+f^{\prime }\left( t\right) -%
\frac{2}{t^{2}}\int_{0}^{t}sf^{\prime }\left( s\right) ds,
\end{equation*}%
where $f\left( t\right) :=f\left( \gamma \left( t\right) \right) $ and $%
m\left( t\right) :=\frac{d}{dt}\ln J\left( p,t,\xi \right).$  After integrating the last term by parts, it follows
\begin{equation*}
m\left( t\right) \leq \frac{n-1}{t}-\frac{1}{6}t+f^{\prime }\left( t\right) -%
\frac{2}{t}f\left( t\right) +\frac{2}{t^{2}}\int_{0}^{t}f\left( s\right) ds.
\end{equation*}%
Using Proposition \ref{f}, one sees that
\begin{equation*}
m\left( t\right) \leq c
\end{equation*}%
for some constant $c>0$ depending only on $n$ and $f\left( p\right) .$
Integrating this inequality from $t=r$ to $t=r+s$, we get%
\begin{equation*}
J\left( p,r+s,\xi \right) \leq J\left( p,r,\xi \right) e^{cs}\leq
e^{c}J\left( p,r,\xi \right) .
\end{equation*}
Finally, integrating with respect to $\xi$ on $S_{p}M$ yields the result.
\end{proof}

We will also need a volume decay estimate for geodesic balls of unit radius.
Such a result was first established for shrinking Ricci solitons in \cite{MW1}.
Here, we extend it to our setting with an improved version. This improved estimate is crucial for the proof of Theorem \ref{Vol}.

\begin{lemma}
\label{Decay} Let $\left( M,g,e^{-f}dv\right) $ be a complete smooth metric
measure space of dimension $n$. Assume that $\mathrm{Ric}_{f}\geq \frac{1}{2}
$ and $\left\vert \nabla f\right\vert ^{2}\leq f.$ Then there exists a
constant $C_{0}>0$ so that
\begin{equation*}
\mathrm{Vol}\left( B_{x}\left( 1\right) \right) \geq \exp \left( -C_{0}\sqrt{%
R\ln R}\right) \mathrm{Vol}\left( B_{p}\left( 1\right) \right)
\end{equation*}%
for $R:=d\left( p,x\right)>2.$ The constant $%
C_{0}$ depends only on $n$ and $f\left( p\right) .$
\end{lemma}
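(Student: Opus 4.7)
The plan is to transport the unit ball at $p$ along a minimizing unit-speed geodesic $\gamma:[0,R]\to M$ with $\gamma(0)=p$ and $\gamma(R)=x$, and to compare $\mathrm{Vol}(B_x(1))$ with $\mathrm{Vol}(B_p(1))$ via the Jacobian of $\exp_p$ using the comparison (\ref{vu5}). For $\xi\in S_pM$ near $\gamma'(0)$, write $J(\xi,r)=J(p,r,\xi)$ and $u(\xi,r)=\ln(J(\xi,r)/r^{n-1})$ along the geodesic $\gamma_\xi(r)=\exp_p(r\xi)$.

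First I would derive a workable differential inequality for $u$. Rewriting (\ref{vu5}) as
\[
(r\,u(\xi,r))'\le -\frac{r^{2}}{4}+r f'(r)-f(r)+f(p),
\]
substituting $|f'|\le\sqrt{f}$ (from $|\nabla f|^{2}\le f$) together with the two-sided quadratic bound $\tfrac{1}{4}(r-a)^{2}\le f(\gamma_\xi(r))\le\tfrac{1}{4}(r+a)^{2}$ from Proposition \ref{f}, and integrating from $0$ to $r$, I obtain quantitative control on $u(\xi,r)$.

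Next I would reduce the volume comparison to an integral of $J$. Choose parameters $\delta,\rho>0$ and let $\Sigma_\rho\subset S_pM$ be the spherical cap of radius $\rho$ around $\gamma'(0)$. A Jacobi-field estimate, using the Bakry-\'{E}mery curvature bound together with the linear bound on $|\nabla f|$ to control the Ricci contribution to the Jacobi equation, shows that for $\delta+cR\rho\le 1$ the region $\mathcal{R}=\{\exp_p(r\xi):r\in[R-\delta,R+\delta],\ \xi\in\Sigma_\rho\}$ is contained in $B_x(1)$, so that
\[
\mathrm{Vol}(B_x(1))\ \ge\ \int_{\Sigma_\rho}\!\int_{R-\delta}^{R+\delta}J(\xi,r)\,dr\,d\xi.
\]
Since $\mathrm{Vol}(B_p(1))\le c(n)\,e^{f(p)}$ by Theorem \ref{Vol_Up}, the lemma reduces to a lower bound on the right-hand integral.

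The main obstacle is that (\ref{vu5}) is a \emph{one-sided} (upper) bound on $m=(\ln J)'$, whereas we need a \emph{lower} bound on $J$. The mechanism for extracting a lower bound is the self-referential term $-\tfrac{1}{r}\ln(J/r^{n-1})$ in (\ref{vu5}): if $u(\xi,\cdot)$ were too negative on a long subinterval of $[0,R]$, then the right-hand side of (\ref{vu5}) would force $m$ to be so large there that $u$ would have to climb back up, contradicting the upper estimate. Quantifying this self-correcting behavior produces a pointwise lower estimate for $u(\xi,R)$ modulo an error term in $R$. The final step is to optimize the parameters $(\delta,\rho)$, balancing the geometric constraint $\delta+cR\rho\le 1$ against the exponential loss in the Jacobian lower bound; a careful choice (with $\rho\sim 1/R$ and $\delta$ small but of order one in $R$) should deliver the announced rate $\exp(-C_0\sqrt{R\ln R})$, sharpening the $\exp(-C_0 R\ln R)$ estimate of \cite{MW1}.
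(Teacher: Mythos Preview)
Your proposal has a genuine gap in the ``self-correcting'' step, and the overall orientation of the argument is backwards relative to what the one-sided comparison can deliver.

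You center geodesic coordinates at $p$ and seek a \emph{lower} bound on $J(p,R,\xi)$ for $\xi$ near $\gamma'(0)$. But (\ref{vu5}) is an upper bound $m\le \text{(stuff)}-\tfrac{1}{r}u$, and when $u$ is very negative the right-hand side becomes \emph{larger}, i.e.\ the constraint on $m$ becomes \emph{weaker}, not stronger. So the mechanism you describe---``if $u$ were too negative, $m$ would be forced large and $u$ would climb back''---does not follow: a large upper bound on $m$ is compatible with $m$ being arbitrarily negative, hence with $u$ continuing to plunge. Indeed the integrated form $(ru)'\le f(p)$ that you write down is precisely an \emph{upper} bound on $u$, never a lower one. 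There is also a cut-locus issue: for $\xi\in\Sigma_\rho$ with $\xi\neq\gamma'(0)$ you have no guarantee that $\exp_p(\,\cdot\,\xi)$ is minimizing up to $r=R$, so the Riccati comparison need not hold along those geodesics.

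The paper avoids both problems by reversing the roles of $p$ and $x$: it centers the geodesic coordinates at $x$ and compares $J(x,1,\xi)$ with $J(x,T,\xi)$ along minimizing geodesics from $x$ into $B_p(1)$, where $T\approx R$. Then an \emph{upper} bound on $\ln J(x,T,\xi)-\ln J(x,1,\xi)$ is exactly a \emph{lower} bound on $J(x,1,\xi)$ in terms of $J(x,T,\xi)$, and integrating over those $\xi$ whose geodesics hit $B_p(1)$ recovers $\mathrm{Vol}(B_p(1))$ on the right. The sharp rate $\sqrt{R\ln R}$ does not come from (\ref{vu5}) (the $r^2$-weighted Riccati) but from weighting the Riccati inequality by $r^k$ with the free parameter $k$, and then optimizing by taking $k=\sqrt{T/\ln T}$; this balances the $ck\ln T$ and $cT/k$ error terms. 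Your proposed optimization over $(\delta,\rho)$ addresses only the geometric packaging, not this analytic balance, and would not by itself produce the $\sqrt{R\ln R}$ exponent.
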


\begin{proof}[Proof of Lemma \protect\ref{Decay}]
Let us denote the volume form in
geodesic coordinates centered at $x$ by
\begin{equation*}
dV|_{\exp _{x}\left( r\xi \right) }=J\left( x,r,\xi \right) drd\xi
\end{equation*}%
for $r>0$ and $\xi \in S_{x}M,$ the unit tangent sphere at $x$. Let $R:=d\left(
p,x\right).$ Let $\gamma \left( s\right) $ be a minimizing normal geodesic with $\gamma \left( 0\right) =x$ and $\gamma \left( T\right)
\in B_{p}\left( 1\right) $ for some $T>0.$ By the triangle inequality, we know
that
\begin{equation}
R-1\leq T\leq R+1.  \label{d0}
\end{equation}%
Along $\gamma $, according to (\ref{vu1}),%
\begin{equation*}
m^{\prime }\left( r\right) +\frac{1}{n-1}m^{2}\left( r\right) \leq -\frac{1}{%
2}+f^{\prime \prime }\left( r\right) ,
\end{equation*}%
where $m\left( r\right) :=\frac{d}{dr}\ln J\left( x,r,\xi \right) .$

For an arbitrary $k\geq 2$, multiplying this by $r^{k}$ and integrating from $%
r=0$ to $r=t$, we have%
\begin{gather}
\int_{0}^{t}m^{\prime }\left( r\right) r^{k}dr+\frac{1}{n-1}%
\int_{0}^{t}m^{2}\left( r\right) r^{k}dr\leq -\frac{1}{2\left( k+1\right) }%
t^{k+1}  \label{d0'} \\
+\int_{0}^{t}f^{\prime \prime }\left( r\right) r^{k}dr.  \notag
\end{gather}%
After integrating the first term in (\ref{d0'}) by parts and rearranging terms,
we get%
\begin{eqnarray*}
&&m\left( t\right) t^{k}+\frac{1}{n-1}\int_{0}^{t}\left( m\left( r\right) r^{%
\frac{k}{2}}-\left( n-1\right) \frac{k}{2}r^{\frac{k}{2}-1}\right) ^{2}dr \\
&\leq &\frac{\left( n-1\right) k^{2}}{4\left( k-1\right) }t^{k-1}-\frac{1}{%
2\left( k+1\right) }t^{k+1}+\int_{0}^{t}f^{\prime \prime }\left( r\right)
r^{k}dr.
\end{eqnarray*}%
In particular,
\begin{equation*}
m\left( t\right) \leq \frac{\left( n-1\right) k^{2}}{4\left( k-1\right) }%
\frac{1}{t}-\frac{1}{2\left( k+1\right) }t+\frac{1}{t^{k}}%
\int_{0}^{t}f^{\prime \prime }\left( r\right) r^{k}dr.
\end{equation*}%
Integrating this from $t=1$ to $t=T$, we obtain for some constant $c$
depending only on $n$, %
\begin{equation}
\ln \frac{J\left( x,T,\xi \right) }{J\left( x,1,\xi \right) }\leq c\,k\,\ln T-%
\frac{1}{4\left( k+1\right) }\,T^{2}+A,  \label{d1}
\end{equation}%
where the term $A$ is given by
\begin{equation*}
A:=\int_{1}^{T}\frac{1}{t^{k}}\int_{0}^{t}f^{\prime \prime }\left( r\right)
r^{k}dr.
\end{equation*}%

We now estimate $A$ in the right side of (\ref{d1}). Integrating by parts
implies%

\begin{equation}
A=f\left( T\right) -f\left( 1\right) -k\int_{1}^{T}\frac{1}{t^{k}}%
\int_{0}^{t}f^{\prime }\left( r\right) r^{k-1}drdt.  \label{d2}
\end{equation}%
Noting that by integrating by parts,%
\begin{eqnarray*}
&&-k\int_{1}^{T}\frac{1}{t^{k}}\int_{0}^{t}f^{\prime }\left( r\right)
r^{k-1}drdt=-\frac{k}{k-1}\left( f\left( T\right) -f\left( 1\right) \right)
\\
&&+\frac{k}{k-1}\frac{1}{t^{k-1}}\left( \int_{0}^{t}f^{\prime }\left(
r\right) r^{k-1}dr\right) |_{t=1}^{t=T}.
\end{eqnarray*}%
Putting this into (\ref{d2}), we conclude%
\begin{eqnarray}
A &=&-\frac{1}{k-1}\left( f\left( T\right) -f\left( 1\right) \right)
\label{d4} \\
&&+\frac{k}{k-1}\frac{1}{t^{k-1}}\left( \int_{0}^{t}f^{\prime }\left(
r\right) r^{k-1}dr\right) |_{t=1}^{t=T}.  \notag
\end{eqnarray}%
We now choose
\begin{equation}
k:=\sqrt{\frac{T}{\ln T}}.  \label{d3}
\end{equation}%
Using the hypothesis on $f$ and Proposition \ref{f}, we find that
\begin{eqnarray*}
\left\vert f^{\prime }\left( r\right) \right\vert &\leq &\sqrt{f}\left(
\gamma \left( r\right) \right) \leq \frac{1}{2}d\left( p,\gamma \left(
r\right) \right) +c \\
&\leq &\frac{1}{2}\left( T-r\right) +2c.
\end{eqnarray*}%
Similarly,
\begin{equation*}
f\left( r\right) \geq \frac{1}{4}\left( T-r\right) ^{2}-c\left( T-r\right)-c
\end{equation*}
and
\begin{equation*}
f\left( r\right) \leq \frac{1}{4}\left( T-r\right) ^{2}+c\left(
T-r\right) +c
\end{equation*}%
for $0\leq r\leq T.$ Everywhere in this proof, $c$ denotes
a constant depending only on $n$ and $f\left( p\right) .$ We now begin to
estimate the terms in (\ref{d4}). Integrating by parts, we have%
\begin{gather}
\frac{k}{k-1}\frac{1}{T^{k-1}}\int_{0}^{T}f^{\prime }\left( r\right)
r^{k-1}dr=\frac{k}{k-1}f\left( T\right) -\frac{k}{T^{k-1}}%
\int_{0}^{T}f\left( r\right) r^{k-2}dr  \label{d5} \\
\leq c-\frac{k}{4T^{k-1}}\int_{0}^{T}\left( T-r\right) ^{2}r^{k-2}dr+\frac{ck%
}{T^{k-1}}\int_{0}^{T}\left( T-r\right) r^{k-2}dr  \notag \\
\leq \frac{c}{k}T-\frac{1}{2\left( k-1\right) \left( k+1\right) }T^{2}.
\notag
\end{gather}%
Moreover,%
\begin{eqnarray}
\frac{k}{k-1}\int_{0}^{1}f^{\prime }\left( r\right) r^{k-1}dr &\leq &\frac{k%
}{k-1}\int_{0}^{1}\left( \frac{1}{2}\left( T-r\right) +2c\right) r^{k-1}dr
\label{d6} \\
&\leq &\frac{c}{k}T.  \notag
\end{eqnarray}%
\newline
Also,
\begin{equation*}
-\frac{1}{k-1}\left( f\left( T\right) -f\left( 1\right) \right) \leq \frac{1%
}{4\left( k-1\right) }T^{2}+\frac{c}{k}T.
\end{equation*}%
Together this with (\ref{d6}) and (\ref{d5}), we conclude from (\ref{d4}) that
\begin{equation*}
A\leq \frac{1}{4\left( k+1\right) }T^{2}+\frac{c}{k}T.
\end{equation*}%
Now plugging the above estimate for $A$ into (\ref{d1}) implies
\begin{equation*}
\ln \frac{J\left( x,T,\xi \right) }{J\left( x,1,\xi \right) }\leq ck\ln T+%
\frac{c}{k}T\leq c\sqrt{T\ln T},
\end{equation*}%
where the last inequality follows by (\ref{d3}). We have thus proved that
\begin{equation*}
J\left( x,1,\xi \right) \geq \exp \left( -c\sqrt{R\ln R}\right) J\left(
x,T,\xi \right) .
\end{equation*}%
By integrating this over a subset of $S_{x}M$ consisting of all unit tangent
vectors $\xi $ so that $\exp _{x}\left( T\xi \right) \in B_{p}\left(
1\right) $ for some $T,$ it follows that
\begin{equation*}
\mathrm{Area}\left( \partial B_{x}\left( 1\right) \right) \geq \exp \left( -c%
\sqrt{R\ln R}\right) \mathrm{Vol}\left( B_{p}\left( 1\right) \right) ,
\end{equation*}%
where $R=d\left( p,x\right) .$ Clearly, for $\frac{1}{2}\leq t\leq 1,$
a similar estimate holds for
$\mathrm{Area}\left( \partial B_{x}\left( t\right) \right).$ Therefore,
\begin{equation*}
\mathrm{Vol}\left( B_{x}\left( 1\right) \right) \geq \exp \left( -c\sqrt{%
R\ln R}\right) \mathrm{Vol}\left( B_{p}\left( 1\right) \right) .
\end{equation*}%
This proves the result.
\end{proof}

\subsection{Volume lower bound}

We are now ready to prove the second part of Theorem \ref{Vol}. It
seems that the standard techniques in comparison geometry are not powerful
enough to prove the linear growth volume lower bound. Inspired by Perelman's work in
\cite{P}, we utilize the Log-Sobolev inequality in \cite{BE}, which says that
\begin{equation*}
\int_{M}\phi ^{2}\ln \phi ^{2}e^{-f}\leq 4\int_{M}\left\vert \nabla \phi
\right\vert ^{2}e^{-f}
\end{equation*}%
for any compactly supported function $\phi $ satisfying $\int_{M}\phi
^{2}e^{-f}=\int_{M}e^{-f}.$ A more useful form for us is
obtained by replacing
\begin{equation*}
\phi :=\left( \frac{\int_{M}e^{-f}}{\int_{M}u^{2}}\right) ^{\frac{1}{2}}\,u\,e^{%
\frac{f}{2}}
\end{equation*}%
for any smooth function $u$ with compact support. Then a direct computation leads
to an inequality of the form%
\begin{gather}
\int_{M}u^{2}\ln u^{2}-\left( \int_{M}u^{2}\right) \ln \left(
\int_{M}u^{2}\right) \leq 4\int_{M}\left\vert \nabla u\right\vert
^{2}-\left( \ln \mu \right) \int_{M}u^{2}  \label{log_sob} \\
+\int_{M}\left( \left\vert \nabla f\right\vert ^{2}-f\right)
u^{2}+2\int_{M}\left\langle \nabla f,\nabla u^{2}\right\rangle   \notag
\end{gather}%
for any function $u$ with compact support in $M.$
Here, we have denoted $\mu :=\int_{M}e^{-f}dv<\infty.$
In the case of gradient
shrinking Ricci solitons, the inequality can be further simplified, see
\cite{CN}.

\begin{theorem}
\label{Vol_Low}Let $\left( M,g,f\right) $ be a complete smooth metric
measure space of dimension $n$. Assume that $\mathrm{Ric}_{f}\geq \frac{1}{2}
$ and $\left\vert \nabla f\right\vert ^{2}\leq f.$ Then there exists a
constant $c_{0}>0,$ depending only on $n,\mu $ and $f\left( p\right) ,$ so
that%
\begin{equation*}
\mathrm{Area}\left( \partial B_{p}\left( t\right) \right) \geq c_{0},
\end{equation*}%
for any $t>1.$ In particular,
\begin{equation*}
\mathrm{Vol}\left( B_{p}\left( R\right) \right) \geq c_{0}R,
\end{equation*}%
for any $R>1.$
\end{theorem}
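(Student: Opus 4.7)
The plan is to argue by contradiction: assume $\mathrm{Area}(\partial B_p(t_0))$ can be made arbitrarily small for some $t_0>1$, and derive a contradiction. First I would localize the assumption in terms of Riemannian volume near a point at distance $\approx t_0$ from $p$. By Proposition \ref{Relative} with basepoint $p$, $\mathrm{Area}(\partial B_p(r))\leq c\,\mathrm{Area}(\partial B_p(t_0))$ for $r\in[t_0,t_0+1]$, so picking any $x_0\in\partial B_p(t_0+\tfrac12)$ gives
\[
\mathrm{Vol}(B_{x_0}(\tfrac12))\leq \mathrm{Vol}(B_p(t_0+1)\setminus B_p(t_0))\leq c\,\mathrm{Area}(\partial B_p(t_0)),
\]
which can be made as small as we like.

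If $t_0$ stays in a bounded range, Lemma \ref{Decay} immediately yields the contradiction, since $\mathrm{Vol}(B_{x_0}(1))\geq \exp(-C\sqrt{t_0\ln t_0})\,\mathrm{Vol}(B_p(1))$ is bounded below by a positive constant depending only on the range of $t_0$. So the substantive case is $t_0\to\infty$.

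For this case, I would apply the log-Sobolev inequality (\ref{log_sob}) to a Lipschitz cutoff $u$ with $u\equiv 1$ on $B_{x_0}(\tfrac12)$, $\mathrm{supp}(u)\subset B_{x_0}(1)$, and $|\nabla u|\leq 4$. Write $V:=\int u^2\,dv$ and $W:=\mathrm{Vol}(B_{x_0}(1))$. On $\mathrm{supp}(u)$, Proposition \ref{f} yields $f\leq \tfrac14(t_0+O(1))^2$ and $|\nabla f|\leq\sqrt{f}\leq\tfrac12(t_0+O(1))$. The term $\int(|\nabla f|^2-f)u^2$ is nonpositive by hypothesis and is discarded; the gradient term $4\int|\nabla u|^2$ and $-(\ln\mu)V$ contribute $O(W)$; the drift term $2\int\langle\nabla f,\nabla u^2\rangle=4\int u\langle\nabla u,\nabla f\rangle$ is handled via Cauchy-Schwarz combined with the pointwise bound on $|\nabla f|$; and the entropy term uses $-t\ln t\leq 1/e$ on $[0,1]$. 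Assembling these, one should obtain an inequality producing a lower bound $W\geq c(n,\mu,f(p))>0$, which contradicts the upper bound from the first step once $\mathrm{Area}(\partial B_p(t_0))$ is sufficiently small.

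The main obstacle, and the heart of the argument, is arranging the log-Sobolev estimate so that the $t_0$-dependent contributions from the drift term (where $|\nabla f|$ is of size $\sim t_0$ on $B_{x_0}(1)$) and from the upper bound on $f$ do not blow up as $t_0\to\infty$. A naive Cauchy-Schwarz splitting yields only a lower bound like $e^{-Ct_0}$ or $e^{-Ct_0^2}$, which is insufficient for the uniform constant $c_0$ claimed by the theorem. The resolution, in the spirit of Perelman's no-local-collapsing for shrinking solitons, is to exploit the precise cancellation between the nonpositive term $\int(|\nabla f|^2-f)u^2$ and the $f$-dependent contributions generated by the substitution $\phi=\sqrt{\mu/\int u^2}\,u\,e^{f/2}$ used to derive (\ref{log_sob}), so that the quadratic growth of $f$ is absorbed; this is precisely where the coupled hypotheses $\mathrm{Ric}_f\geq\tfrac12$ and $|\nabla f|^2\leq f$ play together and where the finiteness of $\mu$ enters the constant $c_0$.
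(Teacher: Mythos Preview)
Your proposal correctly identifies the essential difficulty---the drift term $2\int\langle\nabla f,\nabla u^2\rangle$ scales like $t_0$ on a unit ball centered at $x_0\in\partial B_p(t_0)$---but it does not actually resolve it. The ``Perelman-style cancellation'' you invoke relies, in the soliton case, on the identities $|\nabla f|^2+S=f$ and $\Delta f=\tfrac{n}{2}-S$, which together allow one to integrate the drift term by parts and absorb it into a bounded scalar-curvature contribution. Under the bare hypotheses $\mathrm{Ric}_f\geq\tfrac12$ and $|\nabla f|^2\leq f$ there is no such identity, and a cutoff supported in a fixed unit ball at distance $t_0$ yields at best $W\geq e^{-Ct_0}$ from the log-Sobolev inequality. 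This is exactly the estimate you say is insufficient; your last paragraph asserts that the quadratic growth of $f$ is ``absorbed'' but does not indicate any mechanism for this.

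The paper's proof is organized quite differently and avoids this obstacle altogether. Rather than a cutoff at a far point, it takes $u_t$ to be a tent function on the annulus $B_p(t+1)\setminus B_p(t-1)$ and sets $y(t)=\int u_t^2$. The drift term is not bounded pointwise; instead, using the Laplacian comparison (\ref{vu3}) and Proposition~\ref{f}, one shows that the dangerous part of $2\int\langle\nabla f,\nabla u_t^2\rangle$ equals $-t\,y'(t)$ up to errors controlled by $V(t+1)-V(t-1)$. This produces the differential inequality
\[
t\,y'(t)-y(t)\ln y(t)\leq C\,y(t-2),
\]
and an ODE argument then shows that if $y(t_0)<\delta$ for some $t_0$, one must have $y(t)\leq\sqrt{\delta}\,e^{-\varepsilon t}$ for all $t\geq t_0$. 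It is only at this final step that Lemma~\ref{Decay} enters, to rule out exponential decay of $y$ (since $\exp(-C\sqrt{R\ln R})$ beats any exponential). The key idea you are missing is this conversion of the linearly-growing drift into a first-order ODE for $y(t)$, which has no analogue when the test function is localized at a single far point.
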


\begin{proof}[Proof of Theorem \protect\ref{Vol_Low}]
Since $\left\vert \nabla f\right\vert ^{2}\leq f,$ it follows from (\ref%
{log_sob}) that%
\begin{gather}
\int_{M}u^{2}\ln u^{2}-\left( \int_{M}u^{2}\right) \ln \left(
\int_{M}u^{2}\right) \leq 4\int_{M}\left\vert \nabla u\right\vert
^{2}+C\int_{M}u^{2}  \label{ls} \\
+2\int_{M}\left\langle \nabla f,\nabla u^{2}\right\rangle .  \notag
\end{gather}%
In \cite{MW1}, we proved the volume lower bound in the particular case of
gradient shrinking Ricci solitons. While the proof here uses some of the
ingredients from \cite{MW1}, one significant difference is the last term in (%
\ref{ls}) which has a different coefficient in the shrinking solitons case.
This turns out to cause some technical difficulties and requires the use of
Lemma \ref{Decay}. We also take this opportunity to present a more
streamlined argument than that in \cite{MW1} by avoiding the discussion
of whether the total volume is finite or not.

For simplicity, we use the notations
\begin{equation*}
B\left( t\right) :=B_{p}\left( t\right) \text{ \ and }V\left( t\right) :=%
\mathrm{Vol}\left( B_{p}\left( t\right) \right) .
\end{equation*}%
Define a function $u_{t}:M\rightarrow \mathbb{R}$ by
\begin{equation*}
u_{t}\left( x\right) =\left\{
\begin{array}{c}
t+1-r\left( x\right) \\
r\left( x\right) -\left( t-1\right) \\
0%
\end{array}%
\right.
\begin{array}{l}
\text{on }B\left( t+1\right) \backslash B\left( t\right) \\
\text{on }B\left( t\right) \backslash B\left( t-1\right) \\
\text{otherwise}%
\end{array}%
\end{equation*}%
 As before, $r(x):=d(p,x)$. Plugging $u_{t}$ into (\ref{ls}) and noting that $x\ln x\geq -\frac{1}{e}$
for any $x>0,$ we obtain%
\begin{gather}
-\int_{M}u_{t}^{2}\ln \left( \int_{M}u_{t}^{2}\right) \leq C\left( V\left(
t+1\right) -V\left( t-1\right) \right)  \label{vs1} \\
+2\int_{B\left( t+1\right) \backslash B\left( t-1\right) }\left\langle
\nabla f,\nabla u_{t}^{2}\right\rangle .  \notag
\end{gather}%
Everywhere in this proof, $C$ denotes a constant depending on $n,\mu $
and $f\left( p\right) $. Let us denote
\begin{equation*}
y\left( t\right) :=\int_{M}u_{t}^{2}.
\end{equation*}%
We compute
\begin{eqnarray}
\int_{B\left( t+1\right) \backslash B\left( t-1\right) }\left\langle \nabla
f,\nabla u_{t}^{2}\right\rangle &=&2\int_{B\left( t\right) \backslash
B\left( t-1\right) }\left\langle \nabla f,\nabla r\right\rangle u_{t}
\label{vs2} \\
&&-2\int_{B\left( t+1\right) \backslash B\left( t\right) }\left\langle
\nabla f,\nabla r\right\rangle u_{t}.  \notag
\end{eqnarray}%
We now estimate each term in (\ref{vs2}). Using Proposition \ref{f}, we get
\begin{gather}
2\int_{B\left( t\right) \backslash B\left( t-1\right) }\left\langle \nabla
f,\nabla r\right\rangle u_{t}\leq 2\int_{B\left( t\right) \backslash B\left(
t-1\right) }\left\vert \nabla f\right\vert u_{t}  \label{vs3} \\
\leq t\int_{B\left( t\right) \backslash B\left( t-1\right) }u_{t}+C\left(
V\left( t\right) -V\left( t-1\right) \right) .  \notag
\end{gather}

By (\ref{vu3}), we have
\begin{equation}
\left\langle \nabla f,\nabla r\right\rangle \geq \Delta r-\frac{n-1}{r}+%
\frac{1}{6}r+\frac{2}{r^{2}}\int_{0}^{r}tf^{\prime }\left( t\right) dt.
\label{vs4}
\end{equation}%
We integrate the last term by parts and use Proposition \ref{f} to conclude%
\begin{eqnarray*}
\frac{2}{r^{2}}\int_{0}^{r}tf^{\prime }\left( t\right) dt &=&\frac{2}{r}%
f\left( r\right) -\frac{2}{r^{2}}\int_{0}^{r}f\left( t\right) dt \\
&\geq &\frac{1}{3}r-C.
\end{eqnarray*}%
Therefore, (\ref{vs4}) implies%
\begin{equation*}
\left\langle \nabla f,\nabla r\right\rangle \geq \Delta r+\frac{1}{2}r-C.
\end{equation*}%
Applying this to the second term in (\ref{vs2}), we estimate%
\begin{gather*}
-2\int_{B\left( t+1\right) \backslash B\left( t\right) }\left\langle \nabla
f,\nabla r\right\rangle u_{t}\leq -2\int_{B\left( t+1\right) \backslash
B\left( t\right) }\left( \Delta r+\frac{1}{2}r-C\right) u_{t} \\
\leq 2\int_{B\left( t+1\right) \backslash B\left( t\right) }\left\langle
\nabla r,\nabla u_{t}\right\rangle +2A\left( t\right) -\int_{B\left(
t+1\right) \backslash B\left( t\right) }ru_{t} \\
+C\left( V\left( t+1\right) -V\left( t\right) \right) .
\end{gather*}%
Notice that $\left\langle \nabla r,\nabla u\right\rangle \leq 0$ on $B\left(
t+1\right) \backslash B\left( t\right).$ Also, by the mean value theorem,
\begin{equation*}
V\left( t\right) -V\left( t-1\right) =A\left( \xi \right) \geq cA\left(
t\right)
\end{equation*}%
for some $t-1\leq \xi \leq t,$ where we have used Proposition %
\ref{Relative}. Consequently,%
\begin{gather}
-2\int_{B\left( t+1\right) \backslash B\left( t\right) }\left\langle \nabla
f,\nabla r\right\rangle u_{t}\leq -t\int_{B\left( t+1\right) \backslash
B\left( t\right) }u_{t}  \label{vs5} \\
+C\left( V\left( t+1\right) -V\left( t-1\right) \right) .  \notag
\end{gather}%
Plugging (\ref{vs3}) and (\ref{vs5}) into (\ref{vs2}), we have
\begin{gather}
2\int_{B\left( t+1\right) \backslash B\left( t-1\right) }\left\langle \nabla
f,\nabla u_{t}^{2}\right\rangle \leq -2t\left( \int_{B\left( t+1\right)
\backslash B\left( t\right) }u_{t}-\int_{B\left( t\right) \backslash B\left(
t-1\right) }u_{t}\right)  \label{vs6} \\
+C\left( V\left( t+1\right) -V\left( t-1\right) \right) .  \notag
\end{gather}%
On the other hand, by a direct computation, we get
\begin{equation*}
\frac{d}{dt}y\left( t\right) =\frac{d}{dt}\int_{M}u_{t}^{2}=2\left(
\int_{B\left( t+1\right) \backslash B\left( t\right) }u_{t}-\int_{B\left(
t\right) \backslash B\left( t-1\right) }u_{t}\right) .
\end{equation*}%
Thus, (\ref{vs6}) becomes%
\begin{equation*}
2\int_{B\left( t+1\right) \backslash B\left( t-1\right) }\left\langle \nabla
f,\nabla u_{t}^{2}\right\rangle \leq -ty^{\prime }\left( t\right) +C\left(
V\left( t+1\right) -V\left( t-1\right) \right) .
\end{equation*}%
Feeding this back into (\ref{vs1}), we conclude%
\begin{equation}
ty^{\prime }\left( t\right) -y\left( t\right) \ln y\left( t\right) \leq
C\left( V\left( t+1\right) -V\left( t-1\right) \right) .  \label{vs7}
\end{equation}

By the mean value theorem, there exist $t-1\leq \xi _{1}\leq t+1$ and $%
t-2\leq \xi _{2}\leq t-1$ so that
\begin{gather*}
V\left( t+1\right) -V\left( t-1\right) =2A\left( \xi _{1}\right) \text{ \ and%
} \\
y\left( t-2\right) \geq \int_{B\left( t-1\right) \backslash B\left(
t-2\right) }u_{t-2}^{2}=\frac{1}{3}A\left( \xi _{2}\right) .
\end{gather*}%
By Proposition \ref{Relative},
\begin{equation*}
V\left( t+1\right) -V\left( t-1\right) \leq C\,y\left( t-2\right) .
\end{equation*}%
In conclusion, by (\ref{vs7}), we get
\begin{equation}
ty^{\prime }\left( t\right) -y\left( t\right) \ln y\left( t\right) \leq
C\,y\left( t-2\right) .  \label{vs8}
\end{equation}%
In \cite{MW1}, we showed that if $y$ satisfies such a differential inequality
and
\begin{equation*}
\liminf_{t\rightarrow \infty }y\left( t\right) =0,
\end{equation*}%
then $y$ decays exponentially, i.e.,%
\begin{equation*}
y\left( t\right) \leq \frac{C}{e^{at}}
\end{equation*}%
for some $a>0.$ For the sake of completeness, we include the details
below. In fact, we will prove a stronger statement. Let $\delta >0$ be
sufficiently small to be chosen later, which depends only on $n,\mu $ and $f\left(
p\right) $. Let us assume that there exists an $\frac{1}{2}>\varepsilon >0$
so that
\begin{equation}
y\left( \frac{1}{\varepsilon }\right) <\delta .  \label{vs9}
\end{equation}%
For $t_{0}:=\frac{1}{\varepsilon }\geq 2,$ we claim that
\begin{equation}
y\left( t\right) <\sqrt{\delta }e^{-\varepsilon t}\ \ \ \ \text{for any }%
t_{0}+2\leq t\leq t_{0}+4.  \label{vs10}
\end{equation}

Observe first that by the mean value theorem, we have
\begin{eqnarray}
\delta &>&y\left( t_{0}\right) =\int_{t_{0}-1}^{t_{0}+1}u_{t_{0}}^{2}\left(
r\right) A\left( r\right) dr  \label{vs11} \\
&=&A\left( \xi _{1}\right) \int_{t_{0}-1}^{t_{0}+1}u_{t_{0}}^{2}\left(
r\right) dr=\frac{2}{3}A\left( \xi _{1}\right) ,  \notag
\end{eqnarray}%
where $t_{0}-1\leq \xi _{1}\leq t_{0}+1.$ For $t\geq t_{0}+2,$
by the mean value theorem,
\begin{equation*}
y\left( t\right) =\int_{t-1}^{t+1}u_{t}^{2}\left( r\right) A\left( r\right)
dr=\frac{2}{3}A\left( \xi _{2}\right)
\end{equation*}%
for some $t-1\leq \xi _{2}\leq t+1.$ So by Proposition \ref{Relative}, %
\begin{equation*}
y\left( t\right) =\frac{2}{3}A\left( \xi _{2}\right) \leq CA\left( \xi
_{1}\right) \leq C\,\delta .
\end{equation*}%
Since $t_{0}\varepsilon =1,$ we have
\begin{equation}
y\left( t\right) \leq C\,\delta e^{-\varepsilon t}  \label{vs12}
\end{equation}%
for any $t_{0}+2\leq t\leq t_{0}+4.$ We now choose $\delta $
sufficiently small so that $C\,\delta <\sqrt{\delta }.$ Then, (\ref{vs12})
obviously implies (\ref{vs10}).

Now we claim that
\begin{equation}
y\left( t\right) <\sqrt{\delta }e^{-\varepsilon t},  \label{vs13}
\end{equation}%
for $t\geq t_{0}+2.$ If (\ref{vs13}) fails to be true for some $t\geq
t_{0}+2,$ then there exists a first $t=r$ so that $y\left( r\right) =\sqrt{\delta
}e^{-\varepsilon r}.$ The choice of $r$ implies
\begin{eqnarray*}
y\left( r\right) &=&\sqrt{\delta }e^{-\varepsilon r} \\
y^{\prime }\left( r\right) &\geq &-\varepsilon \sqrt{\delta }e^{-\varepsilon
r}.
\end{eqnarray*}

Since (\ref{vs10}) is true for $t\leq t_{0}+4,$ we know that $r-2\geq
t_{0}+2.$ Consequently,
\begin{equation*}
y\left( r-2\right) \leq \sqrt{\delta }e^{-\varepsilon \left( r-2\right) }.
\end{equation*}%
Applying (\ref{vs8}) to $t=r,$ one sees%
\begin{gather*}
-\varepsilon \sqrt{\delta }re^{-\varepsilon r}+\sqrt{\delta }e^{-\varepsilon
r}\left( -\frac{1}{2}\ln \delta +\varepsilon r\right) \leq ry^{\prime
}\left( r\right) -y\left( r\right) \ln y\left( r\right) \\
\leq Cy\left( r-2\right) \leq C\sqrt{\delta }e^{-\varepsilon \left(
r-2\right) }.
\end{gather*}%
After some simplification, this gives%
\begin{equation*}
-\ln \delta \leq 2e^{2\varepsilon }C\leq 5C.
\end{equation*}%
But this is impossible if $\delta $ is chosen sufficiently small, say, $\delta
=e^{-6C}$. Therefore, (\ref{vs13}) is true for all $t\geq t_{0}.$ On the other
hand, this contradicts with Lemma \ref{Decay}. The contradiction implies that there exists $\delta >0$ depending only on $n,\mu $ and $f\left(
p\right) $ so that
\begin{equation*}
y\left( t\right) \geq \delta \text{ \ for any }t\geq 2.
\end{equation*}%
By the mean value theorem, we have $\mathrm{Area}\left( B_{p}\left( \xi
\right) \right) \geq \frac{1}{2}\delta $ for some $t-1\leq \xi \leq t+1.$
By Proposition \ref{Relative}, this implies $\mathrm{Area}\left( B_{p}\left(
t-1\right) \right) \geq \frac{1}{C}\delta $ for any $t\geq 2.$ This proves
the result.
\end{proof}

We conclude this section with an example to show the sharpness of our
assumption on $f.$ The volume estimates are obviously sharp as this is the case
for shrinking Ricci solitons.

Let us consider $M=\mathbb{R}\times N,$ where $N$ is compact and
\begin{equation*}
ds_{M}^{2}=dt^{2}+e^{-2at}ds_{N}^{2}.
\end{equation*}%
Here $t\in \mathbb{R}$ and $a>0$ is small. The Ricci curvature of $M$ is
computed as
\begin{eqnarray*}
\mathrm{Ric}_{11} &=&-\left( n-1\right) a^{2}g_{11} \\
\mathrm{Ric}_{\alpha \beta } &=&\mathrm{Ric}_{\alpha \beta }^{N}-\left(
n-1\right) a^{2}g_{\alpha \beta },
\end{eqnarray*}%
where $e_{1}=\frac{\partial }{\partial t}$ and $\left\{ e_{\alpha }\right\}
_{\alpha \geq 2}$ is tangential to $N.$ For $f:=\left( \frac{1}{4}+\frac{%
\left( n-1\right) a^{2}}{2}\right) t^{2},$ we have
\begin{eqnarray*}
f_{11} &=&\left( \frac{1}{2}+\left( n-1\right) a\right) g_{11} \\
f_{\alpha \beta } &=&\left( \frac{1}{2}+\left( n-1\right) a^{2}\right)
\left( 1-at\right) g_{\alpha \beta }.
\end{eqnarray*}%
So $\mathrm{Ric}_{f}\geq \frac{1}{2}$ is true if for some $%
c_{0},c_{1}>0$ depending on $a,$
\begin{equation*}
\mathrm{Ric}_{\alpha \beta }^{N}\geq \left( c_{0}+c_{1}t\right) g_{\alpha
\beta }=\left( c_{0}+c_{1}t\right) e^{-2at}h_{\alpha \beta },
\end{equation*}%
where $h$ denotes the metric on $N$. Since $\left( c_{0}+c_{1}t\right)
e^{-2at}$ is bounded above for $t\in \mathbb{R},$ there exists such
a compact manifold $N$.
Note that for any $\varepsilon >0$ small, there exists $a>0$ so that
\begin{equation*}
\left\vert \nabla f\right\vert ^{2}\leq \left( 1+\varepsilon \right) f\text{
\ \ on }M.
\end{equation*}%
Moreover, $M$ has two ends, one with finite volume and another with exponential growth volume. Hence, the condition (\ref{C}) in Theorem \ref{Vol} can not be relaxed to $\left\vert \nabla f\right\vert ^{2}\leq \left( 1+\varepsilon \right) f.$

\vskip 0.3in

\address
{\noindent Department of Mathematics\\
University of Connecticut\\
Storrs, CT 06269\\
USA\\
\email{\textit{E-mail address}: {\tt ovidiu.munteanu@uconn.edu}
}

\vskip 0.3in

\address
{\noindent School of Mathematics \\ University of Minnesota\\ Minneapolis, MN
55455\\ USA\\ \email{\textit{E-mail address}: {\tt jiaping@math.umn.edu}}


\begin{thebibliography}{99}
\bibitem{BE} D. Bakry and M. \'{E}mery, Diffusions hypercontractives,
Seminaire de probabilites, XIX, 1983/84, volume 1123 of Lecture Notes in
Math., 177--206. Springer, Berlin, 1985.

\bibitem{B} R. Brooks, A relation between growth and the spectrum of the
Laplacian, Math. Z. 178 (1981), 501-508.

\bibitem{Ch} B.L.\ Chen, Strong uniqueness of the Ricci flow, J.
Differential Geom. 82 (2009), no. 2, 362-382.

\bibitem{Cao} H.D. Cao, Recent progress on Ricci solitons, Adv. Lect. Math.
11 (2) (2010), 1-38.

\bibitem{CG} J. Cheeger and D. Gromoll, The splitting theorem for manifolds
of nonnegative Ricci curvature, J. Differential Geometry 6 (1971), 119--128.

\bibitem{C} S.Y. Cheng, Eigenvalue comparison theorems and its geometric
applications, Math. Z. 143 (1975) 289-297.

\bibitem{CZ} H.D. Cao and D. Zhou, On complete gradient shrinking Ricci
solitons, J. Differential Geom. 85 (2010), no. 2, 175-186.

\bibitem{CN} J. Carillo and L. Ni, Sharp logarithmic sobolev inequalities on
gradient solitons and applications. Comm. Anal. Geom. 17, 721--753 (2009).

\bibitem{CLN} B. Chow, P. Lu and L. Ni, Hamilton's Ricci flow, Graduate
studies in mathematics, 2006.

\bibitem{FLZ} F. Fang, X.-D. Li, and Z. Zhang, Two generalizations of
Cheeger-Gromoll splitting theorem via Bakry-\'{E}mery Ricci curvature,
Annales de l'Institut Fourier, 59 no. 2 (2009), p. 563-573

\bibitem{G} L. Gross,
Logarithmic Sobolev inequalities, American J. Math. 97 (1975), 1061-1083.

\bibitem{H} R. Hamilton, The formation of singularities in the Ricci flow,
Surveys in Differential Geom. 2 (1995), 7-136, International Press.

\bibitem{HM} R. Haslhofer and R. M\"{u}ller, A compactness theorem for
complete Ricci shrinkers, Geom. Funct. Anal. 21 (2011), 1091-1116.


\bibitem{L1} P. Li, Lecture Notes on Geometric Analysis, Lecture Notes
Series No. 6, Research Institute of Mathematics, Global Analysis Research
Center, Seoul National University, Korea (1993).

\bibitem{L} J. Lott, Some geometric properties of the Bakry-Emery-Ricci tensor,
Comm. Math. Helv. 78 (2003), 865--883

\bibitem{LV} J. Lott and C. Villani, Ricci curvature for metric-measure spaces
via optimal transport, Annals of Math. 169 (2009), 903--991.

\bibitem{LW1} P. Li and J. Wang, Complete manifolds with positive spectrum,
J. Differential Geom. 58 (2001), 501--534.

\bibitem{LW2} P. Li and J. Wang, Complete manifolds with positive spectrum, II,
J. Differential Geom. 62 (2002), 143--162.

\bibitem{Lich} A. Lichnerowicz, Varietes riemanniennes a tensor C non
negatif. C. R. Acad. Sci. Paris Sr. A, 271 (1970) 650--653.


\bibitem{M} F. Morgan, Manifolds with Density. Notices of the Amer. Math.
Soc., 52 (2005), no. 8, 853--858.

\bibitem{MS} O. Munteanu and N. Sesum, On gradient Ricci solitons, to appear
in J. Geom. Anal.

\bibitem{MW} O. Munteanu and J. Wang, Smooth metric measure spaces with
nonnegative curvature, Comm. Anal.\ Geom 19 (2011), no. 3, 451-486.

\bibitem{MW1} O. Munteanu and J. Wang, Analysis of the weighted Laplacian
and applications to Ricci solitons, Comm. Anal. Geom. 20 (2012), no. 1,
55--94.

\bibitem{P} G.\ Perelman, The entropy formula for the Ricci flow and its
geometric applications, arXiv:math. DG/0211159.

\bibitem{SY} R. Schoen and S.T. Yau, Lectures on Differential Geometry,
International Press, 1994.

\bibitem{S1} K. Sturm,
On the geometry of metric measure spaces. I. Acta Math. 196 (2006), 65--131.

\bibitem{S2} K. Sturm,
On the geometry of metric measure spaces. II. Acta Math. 196 (2006), 133-177.

\bibitem{WW} G. Wei and W. Wylie, Comparison geometry for the Bakry-\'{E}%
mery Ricci tensor, J. Differential Geom (2009), 377-405.

\bibitem{Yang} N. Yang, A note on nonnegative Bakry-Emery Ricci Curvature,
Arch. Math. 93 (2009), no. 5, 491-496.
\end{thebibliography}
\end{document}